\numberwithin{equation}{section}
\theoremstyle{plain}
\newtheorem{theorem}{Theorem}[section]
\newtheorem{proposition}[theorem]{Proposition}
\newtheorem{definition}[theorem]{Definition}
\theoremstyle{definition}
\newtheorem{remark}[theorem]{Remark}
\begin{document}

\newcommand{\eq}{equation}
\newcommand{\real}{\ensuremath{\mathbb R}}
\newcommand{\comp}{\ensuremath{\mathbb C}}
\newcommand{\rn}{\ensuremath{{\mathbb R}^n}}
\newcommand{\tn}{\ensuremath{{\mathbb T}^n}}
\newcommand{\rnp}{\ensuremath{\real^n_+}}
\newcommand{\rnn}{\ensuremath{\real^n_-}}
\newcommand{\Rn}{\ensuremath{{\mathbb R}^{n-1}}}
\newcommand{\Zn}{\ensuremath{{\mathbb Z}^{n-1}}}
\newcommand{\no}{\ensuremath{\nat_0}}
\newcommand{\ganz}{\ensuremath{\mathbb Z}}
\newcommand{\zn}{\ensuremath{{\mathbb Z}^n}}
\newcommand{\zom}{\ensuremath{{\mathbb Z}_{\Om}}}
\newcommand{\zOm}{\ensuremath{{\mathbb Z}^{\Om}}}
\newcommand{\As}{\ensuremath{A^s_{p,q}}}
\newcommand{\Bs}{\ensuremath{B^s_{p,q}}}
\newcommand{\Fs}{\ensuremath{F^s_{p,q}}}
\newcommand{\Fsr}{\ensuremath{F^{s,\rloc}_{p,q}}}
\newcommand{\nat}{\ensuremath{\mathbb N}}
\newcommand{\Om}{\ensuremath{\Omega}}
\newcommand{\di}{\ensuremath{{\mathrm d}}}
\newcommand{\sn}{\ensuremath{{\mathbb S}^{n-1}}}
\newcommand{\Ac}{\ensuremath{\mathcal A}}
\newcommand{\Acs}{\ensuremath{\Ac^s_{p,q}}}
\newcommand{\Bc}{\ensuremath{\mathcal B}}
\newcommand{\Cc}{\ensuremath{\mathcal C}}
\newcommand{\cc}{{\scriptsize $\Cc$}${}^s (\rn)$}
\newcommand{\ccd}{{\scriptsize $\Cc$}${}^s (\rn, \delta)$}
\newcommand{\Fc}{\ensuremath{\mathcal F}}
\newcommand{\Lc}{\ensuremath{\mathcal L}}
\newcommand{\Mc}{\ensuremath{\mathcal M}}
\newcommand{\Ec}{\ensuremath{\mathcal E}}
\newcommand{\Pc}{\ensuremath{\mathcal P}}
\newcommand{\Efr}{\ensuremath{\mathfrak E}}
\newcommand{\Mfr}{\ensuremath{\mathfrak M}}
\newcommand{\Mbf}{\ensuremath{\mathbf M}}
\newcommand{\Dbb}{\ensuremath{\mathbb D}}
\newcommand{\Lbb}{\ensuremath{\mathbb L}}
\newcommand{\Pbb}{\ensuremath{\mathbb P}}
\newcommand{\Qbb}{\ensuremath{\mathbb Q}}
\newcommand{\Rbb}{\ensuremath{\mathbb R}}
\newcommand{\vp}{\ensuremath{\varphi}}
\newcommand{\hra}{\ensuremath{\hookrightarrow}}
\newcommand{\supp}{\ensuremath{\mathrm{supp \,}}}
\newcommand{\ssupp}{\ensuremath{\mathrm{sing \ supp\,}}}
\newcommand{\dist}{\ensuremath{\mathrm{dist \,}}}
\newcommand{\unif}{\ensuremath{\mathrm{unif}}}
\newcommand{\ve}{\ensuremath{\varepsilon}}
\newcommand{\vk}{\ensuremath{\varkappa}}
\newcommand{\vr}{\ensuremath{\varrho}}
\newcommand{\pa}{\ensuremath{\partial}}
\newcommand{\oa}{\ensuremath{\overline{a}}}
\newcommand{\ob}{\ensuremath{\overline{b}}}
\newcommand{\of}{\ensuremath{\overline{f}}}
\newcommand{\LA}{\ensuremath{L^r\!\As}}
\newcommand{\LcA}{\ensuremath{\Lc^{r}\!A^s_{p,q}}}
\newcommand{\LcdA}{\ensuremath{\Lc^{r}\!A^{s+d}_{p,q}}}
\newcommand{\LcB}{\ensuremath{\Lc^{r}\!B^s_{p,q}}}
\newcommand{\LcF}{\ensuremath{\Lc^{r}\!F^s_{p,q}}}
\newcommand{\Lf}{\ensuremath{L^r\!f^s_{p,q}}}
\newcommand{\La}{\ensuremath{\Lambda}}
\newcommand{\Lob}{\ensuremath{L^r \ob{}^s_{p,q}}}
\newcommand{\Lof}{\ensuremath{L^r \of{}^s_{p,q}}}
\newcommand{\Loa}{\ensuremath{L^r\, \oa{}^s_{p,q}}}
\newcommand{\Lcoa}{\ensuremath{\Lc^{r}\oa{}^s_{p,q}}}
\newcommand{\Lcob}{\ensuremath{\Lc^{r}\ob{}^s_{p,q}}}
\newcommand{\Lcof}{\ensuremath{\Lc^{r}\of{}^s_{p,q}}}
\newcommand{\Lca}{\ensuremath{\Lc^{r}\!a^s_{p,q}}}
\newcommand{\Lcb}{\ensuremath{\Lc^{r}\!b^s_{p,q}}}
\newcommand{\Lcf}{\ensuremath{\Lc^{r}\!f^s_{p,q}}}
\newcommand{\id}{\ensuremath{\mathrm{id}}}
\newcommand{\tr}{\ensuremath{\mathrm{tr\,}}}
\newcommand{\trd}{\ensuremath{\mathrm{tr}_d}}
\newcommand{\trL}{\ensuremath{\mathrm{tr}_L}}
\newcommand{\ext}{\ensuremath{\mathrm{ext}}}
\newcommand{\re}{\ensuremath{\mathrm{re\,}}}
\newcommand{\Rea}{\ensuremath{\mathrm{Re\,}}}
\newcommand{\Ima}{\ensuremath{\mathrm{Im\,}}}
\newcommand{\loc}{\ensuremath{\mathrm{loc}}}
\newcommand{\rloc}{\ensuremath{\mathrm{rloc}}}
\newcommand{\osc}{\ensuremath{\mathrm{osc}}}
\newcommand{\pr}{\pageref}
\newcommand{\wh}{\ensuremath{\widehat}}
\newcommand{\wt}{\ensuremath{\widetilde}}
\newcommand{\ol}{\ensuremath{\overline}}
\newcommand{\os}{\ensuremath{\overset}}
\newcommand{\Li}{\ensuremath{\overset{\circ}{L}}}
\newcommand{\Ai}{\ensuremath{\os{\, \circ}{A}}}
\newcommand{\Ci}{\ensuremath{\os{\circ}{\Cc}}}
\newcommand{\dom}{\ensuremath{\mathrm{dom \,}}}
\newcommand{\SA}{\ensuremath{S^r_{p,q} A}}
\newcommand{\SB}{\ensuremath{S^r_{p,q} B}}
\newcommand{\SF}{\ensuremath{S^r_{p,q} F}}
\newcommand{\Hc}{\ensuremath{\mathcal H}}
\newcommand{\Nc}{\ensuremath{\mathcal N}}
\newcommand{\Lci}{\ensuremath{\overset{\circ}{\Lc}}}
\newcommand{\bmo}{\ensuremath{\mathrm{bmo}}}
\newcommand{\BMO}{\ensuremath{\mathrm{BMO}}}
\newcommand{\cm}{\\[0.1cm]}
\newcommand{\Aa}{\ensuremath{\os{\, \ast}{A}}}
\newcommand{\Ba}{\ensuremath{\os{\, \ast}{B}}}
\newcommand{\Fa}{\ensuremath{\os{\, \ast}{F}}}
\newcommand{\Aas}{\ensuremath{\Aa{}^s_{p,q}}}
\newcommand{\Bas}{\ensuremath{\Ba{}^s_{p,q}}}
\newcommand{\Fas}{\ensuremath{\Fa{}^s_{p,q}}}
\newcommand{\Ca}{\ensuremath{\os{\, \ast}{{\mathcal C}}}}
\newcommand{\Cas}{\ensuremath{\Ca{}^s}}
\newcommand{\Car}{\ensuremath{\Ca{}^r}}
\newcommand{\bl}{$\blacksquare$}

\begin{center}
{\Large Mapping properties of pseudodifferential and Fourier operators}
\\[1cm]
{Hans Triebel}
\\[0.2cm]
Institut f\"{u}r Mathematik\\
Friedrich--Schiller--Universit\"{a}t Jena\\
07737 Jena, Germany
\\[0.1cm]
email: hans.triebel@uni-jena.de
\end{center}

\begin{abstract} 
The composition of the Fourier transform in $\rn$ with a suitable pseudodifferential operator is called a Fourier operator. It is 
compact in appropriate function spaces. The paper deals with its spectral theory. This is based on mapping properties of the Fourier
transform as developed in a preceding paper and related assertions for pseudodifferential operators.
\end{abstract}

{\bfseries Keywords:} Pseudodifferential operators, Fourier operators, function \\
\hspace*{20pt}spaces, spectral theory

{\bfseries 2020 MSC:} Primary 46E35, Secondary 35P15, 41A46, 42B10, 47G30

\section{Introduction}   \label{S1}
The mapping $F_\tau$,
\begin{\eq}   \label{1.1}
(F_\tau f)(x) = \int_{\rn} e^{-i x \xi} \tau (x,\xi) f(\xi) \, \di \xi, \qquad x\in \rn,
\end{\eq}
suitably extended from $S(\rn)$ to $S'(\rn)$ (or to appropriate function spaces) is called a {\em Fourier operator} of the class
$\Phi^\sigma_{1,\delta} (\rn)$ with $\sigma \in \real$ and $0 \le \delta \le 1$ if the {\em symbol} $\tau \in C^\infty (\real^{2n})$
satisfies for all $\alpha \in \nat^n_0$, $\gamma \in \nat^n_0$ and related constants $c_{\alpha, \gamma} \ge 0$,
\begin{\eq}  \label{1.2}
\big| D^\alpha_x D^\gamma_\xi \tau (x,\xi) \big| \le c_{\alpha, \gamma} (1 + |\xi|)^{\sigma -|\gamma| + \delta |\alpha|}, \qquad
x\in \rn, \quad \xi \in \rn.
\end{\eq}
Recall that $T_\tau$,
\begin{\eq}   \label{1.3}
(T_\tau f)(x) = \int_{\rn} e^{-i x \xi} \tau (x,\xi) f(\xi)^\vee \, \di \xi, \qquad x\in \rn,
\end{\eq}
are the well--known corresponding pseudodifferential operators of the H\"{o}rmander class $\Psi^\sigma_{1, \delta} (\rn)$. Here
$f^\vee (\xi) = (F^{-1} f)(\xi)$ is the inverse Fourier transform, whereas the Fourier transform $\wh{f}(\xi) = (Ff)(\xi)$ is given by
\eqref{1.1} with $\tau =1$. The elaborated theory of pseudodifferential operators of the class $\Psi^\sigma_{1,\delta} (\rn)$ and the
detailed study of the Fourier transform $F$ in suitable function spaces $B^s_p (\rn) = B^s_{p,p} (\rn)$ in \cite{T21} suggest to deal
with $F_\tau \in \Phi^\sigma_{1, \delta} (\rn)$ as the composition
\begin{\eq}  \label{1.4}
F_\tau = T_\tau \circ F, \qquad T_\tau \in \Psi^\sigma_{1,\delta} (\rn).
\end{\eq}
This is the main topic of the paper. It comes out that $F_\tau$ is compact in some function spaces $B^s_p (\rn)$. Of interest is the
distribution of its non--zero eigenvalues. The respective Theorem \ref{T4.5} in Section \ref{S4.2} is our main result. As far as 
compact mappings  generated by the Fourier transform $F$ in some function spaces of type $B^s_p (\rn)$  are concerned  we rely on
\cite{T21}. Mappings of the pseudodifferential operator $T_\tau$  in the function spaces
\begin{\eq}   \label{1.5}
\As (\rn), \qquad A \in \{B,F \}, \quad s\in \real  \quad \text{and} \quad 0<p,q \le \infty
\end{\eq}
($p<\infty$ for the $F$--spaces) have been treated in the 1980s and 1990s based on the technicalities available at that time. As a preparation of our main result we return in Section \ref{S3} to this topic and offer a new proof based on wavelets. The related
Theorem \ref{T3.1} complemented by Proposition \ref{P3.3} might be of some self--contained interest. They recover and extend what is
already known adapted to our later needs. Finally we discuss in Section \ref{S4.3} so--called {\em Fourier heat operators} as an
example of the general assertions.

\section{Function spaces}    \label{S2}
\subsection{Definitions}   \label{S2.1}
We use standard notation. Let $\nat$ be the collection of all natural numbers and $\no = \nat \cup \{0 \}$. Let $\rn$ be Euclidean $n$-space where
$n\in \nat$. Put $\real = \real^1$, whereas $\comp$ stands for the complex plane.
Let $S(\rn)$ be the Schwartz space of all complex-valued rapidly decreasing infinitely differentiable functions on $\rn$ and let $S' (\rn)$ be the dual space of all tempered distributions on \rn.
Furthermore, $L_p (\rn)$ with $0< p \le \infty$, is the standard complex quasi-Banach space with respect to the Lebesgue measure, quasi-normed by
\begin{\eq}   \label{2.1}
\| f \, | L_p (\rn) \| = \Big( \int_{\rn} |f(x)|^p \, \di x \Big)^{1/p}
\end{\eq}
with the standard modification if $p=\infty$.  
As usual, $\ganz$ is the collection of all integers; and $\zn$ where $n\in \nat$ denotes the
lattice of all points $m= (m_1, \ldots, m_n) \in \rn$ with $m_k \in \ganz$. 

If $\vp \in S(\rn)$ then
\begin{\eq}  \label{2.2}
\wh{\vp} (\xi) = (F \vp)(\xi) = (2\pi )^{-n/2} \int_{\rn} e^{-ix \xi} \vp (x) \, \di x, \qquad \xi \in  \rn,
\end{\eq}
denotes the Fourier transform of \vp. As usual, $F^{-1} \vp$ and $\vp^\vee$ stand for the inverse Fourier transform, given by the right-hand side of
\eqref{2.2} with $i$ in place of $-i$. Here $x \xi = \sum^n_{j=1} x_j \xi_j$, $x\in \rn$, $\xi \in \rn$,
 stands for the scalar product in \rn. Both $F$ and $F^{-1}$ are extended to $S'(\rn)$ in the
standard way. Let $\vp_0 \in S(\rn)$ with
\begin{\eq}   \label{2.3}
\vp_0 (x) =1 \ \text{if $|x|\le 1$} \quad \text{and} \quad \vp_0 (x) =0 \ \text{if $|x| \ge 3/2$},
\end{\eq}
and let
\begin{\eq}   \label{2.4}
\vp_k (x) = \vp_0 (2^{-k} x) - \vp_0 (2^{-k+1} x ), \qquad x\in \rn, \quad k\in \nat.
\end{\eq}
Since
\begin{\eq}   \label{2.5}
\sum^\infty_{j=0} \vp_j (x) =1 \qquad \text{for} \quad x\in \rn,
\end{\eq}
$\vp =\{ \vp_j \}^\infty_{j=0}$ forms a dyadic resolution of unity. The entire analytic functions $(\vp_j \wh{f} )^\vee (x)$ make sense pointwise in $\rn$ for any $f\in S'(\rn)$. 

\begin{definition}   \label{D2.1}
Let $\vp = \{ \vp_j \}^\infty_{j=0}$ be the above dyadic resolution  of unity.
\\[0.1cm]
{\upshape (i)} Let
\begin{\eq}   \label{2.6}
0<p \le \infty, \qquad 0<q \le \infty, \qquad s \in \real.
\end{\eq}
Then $\Bs (\rn)$ is the collection of all $f \in S' (\rn)$ such that
\begin{\eq}   \label{2.7}
\| f \, | \Bs (\rn) \|_{\vp} = \Big( \sum^\infty_{j=0} 2^{jsq} \big\| (\vp_j \widehat{f})^\vee \, | L_p (\rn) \big\|^q \Big)^{1/q} 
\end{\eq}
is finite $($with the usual modification if $q= \infty)$. 
\\[0.1cm]
{\upshape (ii)} Let
\begin{\eq}   \label{2.8}
0<p<\infty, \qquad 0<q \le \infty, \qquad s \in \real.
\end{\eq}
Then $F^s_{p,q} (\rn)$ is the collection of all $f \in S' (\rn)$ such that
\begin{\eq}   \label{2.9}
\| f \, | F^s_{p,q} (\rn) \|_{\vp} = \Big\| \Big( \sum^\infty_{j=0} 2^{jsq} \big| (\vp_j \wh{f})^\vee (\cdot) \big|^q \Big)^{1/q} \big| L_p (\rn) \Big\|
\end{\eq}
is finite $($with the usual modification if $q=\infty)$.
\end{definition}

\begin{remark}    \label{R2.2}
The theory of these spaces $\As (\rn)$, $A \in \{B,F \}$, $s\in \real$, $0<p \le \infty$ ($p<\infty$ for $F$--spaces) has been 
developed in many papers and books, including \cite{T83}, \cite{T92}, \cite{T06} and \cite{T20}. There one finds detailed (historical)
references. It is well known that these spaces are independent of the chosen resolution of unity $\vp$ (equivalent quasi--norms). This
justifies our omission of the subscript $\vp$ in \eqref{2.7} and \eqref{2.9} in the sequel. Mapping properties of pseudodifferential
operators will be discussed in Section \ref{S3} in the context of the spaces $\As(\rn)$ and by passing of $F^s_{\infty,q} (\rn)$ in
full generality. This might be of some self--contained interest. But as already indicated in the Introduction it is mainly a 
preparation of the theory of the so--called Fourier operators as developed in Section \ref{S4}. Then we will specify the above spaces 
$\As (\rn)$ to the distinguished Besov spaces
\begin{\eq}  \label{2.10}
B^s_p (\rn) = B^s_{p,p} (\rn) = F^s_{p,p} (\rn), \qquad s\in \real, \quad 0<p \le \infty,
\end{\eq}
with the H\"{o}lder--Zygmund spaces
\begin{\eq}    \label{2.11}
B^s_\infty (\rn) = \Cc^s (\rn), \qquad s \in \real,
\end{\eq}
as special cases.
\end{remark}

\subsection{Wavelet characterizations}    \label{S2.2}
We assume that the reader is familiar with the basic assertions for the spaces $\As (\rn)$, including wavelet characterizations. But
we need in Section \ref{S3} a few rather specific properties. This may justify to repeat some basic definitions and assertions. We
follow \cite[Section 1.2.1, pp.\,7--10]{T20}. There one finds explanations, discussions and, in particular, references. This will not
be repeated here. As usual, $C^{u} (\real)$ with $u\in
\nat$ collects all bounded complex-valued continuous functions on $\real$ having continuous bounded derivatives up to order $u$ inclusively. Let
\begin{\eq}   \label{2.12}
\psi_F \in C^{u} (\real), \qquad \psi_M \in C^{u} (\real), \qquad u \in \nat,
\end{\eq}
be {\em real} compactly supported Daubechies wavelets with
\begin{\eq}   \label{2.13}
\int_{\real} \psi_M (x) \, x^v \, \di x =0 \qquad \text{for all $v\in \no$ with $v<u$.}
\end{\eq}
One extends these wavelets from $\real$ to $\rn$ by the usual multiresolution procedure. Let $n\in \nat$ and let
\begin{\eq}   \label{2.14}
G = (G_1, \ldots, G_n) \in G^0 = \{F,M \}^n
\end{\eq}
which means that $G_r$ is either $F$ or $M$. Furthermore, let
\begin{\eq}   \label{2.15}
G= (G_1, \ldots, G_n) \in G^* = G^j = \{F, M \}^{n*}, \qquad j \in \nat,
\end{\eq}
which means that $G_r$ is either $F$ or $M$, where $*$ indicates that at least one of the components of $G$ must be an $M$. Hence $G^0$ has $2^n$ elements, whereas $G^j$ with $j\in \nat$ and $G^*$ have $2^n -1$ elements. Let
\begin{\eq}   \label{2.16}
\psi^j_{G,m} (x) = \prod^n_{l=1} \psi_{G_l} \big(2^j x_l -m_l \big), \qquad G\in G^j, \quad m \in \zn, \quad x\in \rn,
\end{\eq}
where (now) $j \in \no$. We always assume that $\psi_F$ and $\psi_M$ in \eqref{2.12} have $L_2$--norm 1. Then 
\begin{\eq}   \label{2.17}
 \big\{ 2^{jn/2} \psi^j_{G,m}: \ j \in \no, \ G\in G^j, \ m \in \zn \big\}
\end{\eq}
is an {\em orthonormal basis} in $L_2 (\rn)$ (for any $u\in \nat$) and
\begin{\eq}   \label{2.18}
f = \sum^\infty_{j=0} \sum_{G \in G^j} \sum_{m \in \zn} \lambda^{j,G}_m \, \psi^j_{G,m}
\end{\eq}
with
\begin{\eq}   \label{2.19}
\lambda^{j,G}_m = \lambda^{j,G}_m (f) = 2^{jn} \int_{\rn} f(x) \, \psi^j_{G,m} (x) \, \di x = 2^{jn} \big(f, \psi^j_{G,m} \big)
\end{\eq}
is the corresponding expansion. Let $\chi_{j,m}$ be the characteristic function of the cube 
\begin{\eq}   \label{2.20}
Q_{j,m} = 2^{-j} m + 2^{-j} (0,1)^n, \qquad j\in \no, \quad m\in \zn.
\end{\eq}

\begin{definition}   \label{D2.3}
Let
\begin{\eq}   \label{2.21}
\lambda = \big\{ \lambda^{j,G}_m \in \comp: \ j \in \no, \ G \in G^j, \ m \in \zn \big\}.
\end{\eq}
Let $0<p,q \le \infty$ and $s\in \real$.
Then
\begin{\eq}   \label{2.22}
b^s_{p,q} (\rn) = \big\{ \lambda: \ \| \lambda \, | b^s_{p,q} (\rn) \| < \infty \big\}
\end{\eq}
with
\begin{\eq}   \label{2.23}
\| \lambda \, | b^s_{p,q} (\rn) \| = \Big( \sum^\infty_{j=0} 2^{j(s- \frac{n}{p})q} \sum_{G \in G^j} \Big( \sum_{m \in \zn} |\lambda^{j,G}_m|^p \Big)^{q/p} \Big)^{1/q}
\end{\eq}
and
\begin{\eq}   \label{2.24}
f^s_{p,q} (\rn) = \big\{ \lambda: \ \| \lambda \, | f^s_{p,q} (\rn) \| < \infty \big\}
\end{\eq}
with
\begin{\eq}   \label{2.25}
\| \lambda \, | f^s_{p,q} (\rn) \| = \Big\| \Big( \sum_{\substack{j\in \no, G\in G^j,\\ m\in \zn}}
2^{jsq} \big| \lambda^{j,G}_m \, \chi_{j,m} (\cdot) \big|^q \Big)^{1/q} \big| L_p (\rn)
\Big\|
\end{\eq}
$($usual modifications if $\max(p,q) =\infty)$.
\end{definition}

We still follow \cite{T20} and the references given there. Let $n\in \nat$ and, as usual,
\begin{\eq}   \label{2.26}
\sigma_p^n = n \Big( \max \big( \frac{1}{p}, 1 \big) - 1 \Big), \qquad \sigma^n_{p,q} = n \Big( \max \big( \frac{1}{p}, \frac{1}{q}, 1 \big) -1 \Big)
\end{\eq}
where $0<p,q \le \infty$.   

\begin{proposition}   \label{P2.4}
{\upshape (i)} Let $0<p \le \infty$, $0<q \le \infty$, $s\in \real$ and
\begin{\eq}   \label{2.27}
u > \max (s, \sigma^n_p -s).
\end{\eq}
Let $f \in S' (\rn)$. Then $f \in \Bs (\rn)$ if, and only if, it can be represented as
\begin{\eq}   \label{2.28}
f= \sum_{\substack{j\in \no, G\in G^j, \\ m\in \zn}}
 \lambda^{j,G}_m \, \psi^j_{G,m}, \qquad \lambda \in b^s_{p,q} (\rn),
\end{\eq}
the unconditional convergence being in $S' (\rn)$. The representation \eqref{2.28} is unique,
\begin{\eq}  \label{2.29}
\lambda^{j,G}_m = \lambda^{j,G}_m (f) =  2^{jn} \big( f, \psi^j_{G,m} \big)
\end{\eq}
and
\begin{\eq}   \label{2.30}
I: \quad f \mapsto \big\{ \lambda^{j,G}_m (f) \big\}
\end{\eq}
is an isomorphic map of $\Bs (\rn)$ onto $b^s_{p,q} (\rn)$.
\\[0.1cm]
{\upshape (ii)} Let $0<p < \infty$, $0<q\le \infty$, $s\in \real$ and
\begin{\eq}   \label{2.31}
u > \max (s, \sigma^n_{p,q} -s).
\end{\eq}
Let $f \in S' (\rn)$. Then $f \in \Fs (\rn)$ if, and only if, it can be represented as
\begin{\eq}   \label{2.32}
f = \sum_{\substack{j\in \no, G\in G^j, \\ m\in \zn}} \lambda^{j,G}_m \, \psi^j_{G,m}, \qquad \lambda \in f^s_{p,q} (\rn),
\end{\eq}
the unconditional convergence being in $S' (\rn)$. The representation \eqref{2.32}
 is unique with \eqref{2.29}. Furthermore $I$ in \eqref{2.30} is an isomorphic map of
$\Fs (\rn)$ onto $f^s_{p,q} (\rn)$.
\end{proposition}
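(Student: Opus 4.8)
The plan is to reduce both assertions to an isomorphism at the level of sequence spaces, splitting the argument into a synthesis step, an analysis step, and a uniqueness step. Since $\{2^{jn/2} \psi^j_{G,m}\}$ is an orthonormal basis of $L_2(\rn)$ by \eqref{2.17}, the candidate coefficients \eqref{2.29} are forced: testing any representation \eqref{2.28} that converges in $S'(\rn)$ against the $\psi^j_{G,m}$ shows that necessarily $\lambda^{j,G}_m = 2^{jn}(f, \psi^j_{G,m})$, which settles uniqueness and identifies the map $I$ in \eqref{2.30}. It then remains to prove the two-sided estimate $\|f \, | \Bs(\rn)\| \approx \|\lambda(f) \, | b^s_{p,q}(\rn)\|$ (and its $F$-counterpart), from which the isomorphism and the convergence statements follow.

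For the synthesis direction --- that $\lambda \in b^s_{p,q}(\rn)$ implies $f = \sum \lambda^{j,G}_m \psi^j_{G,m} \in \Bs(\rn)$ with $\|f\| \lesssim \|\lambda\|$ --- I would appeal to the atomic decomposition theorem for $\As(\rn)$. The functions $\psi^j_{G,m}$ are, after the normalization built into \eqref{2.16}, admissible atoms localized at the cubes $Q_{j,m}$: they are compactly supported, belong to $C^u(\rn)$ and hence carry smoothness of order $u$, while those with $G \in G^j$, $j\ge 1$, inherit from \eqref{2.13} moment (cancellation) conditions up to order $u$. The hypothesis $u > \max(s, \sigma^n_p - s)$ is exactly what the atomic theorem requires: $u > s$ governs the smoothness of the atoms, and $u > \sigma^n_p - s$ governs the number of vanishing moments needed when $p<1$. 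This gives the bound together with unconditional convergence in $S'(\rn)$.

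For the analysis direction --- that $f \in \Bs(\rn)$ implies $\lambda(f) \in b^s_{p,q}(\rn)$ with $\|\lambda(f)\| \lesssim \|f\|$ --- I would use the characterization of $\As(\rn)$ by local means. The coefficients $2^{jn}(f, \psi^j_{G,m})$ are precisely samples of local means of $f$ taken with the kernels $\psi_G$ at scale $2^{-j}$ and grid points $2^{-j}m$. The kernels are $C^u$ and compactly supported, and the cancellation in \eqref{2.13} translates, on the Fourier side, into a zero of high order at the origin, so that $\psi_F, \psi_M$ qualify as kernels in the local-means characterization. Summing the resulting pointwise estimates over the lattice $\zn$ and over $j$ reproduces the $b^s_{p,q}$-norm and yields the converse bound.

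The $F$-case in part (ii) runs along the same two lines, with $b^s_{p,q}(\rn)$ replaced by $f^s_{p,q}(\rn)$. The one structural difference, and the point I expect to be the main obstacle, is that the analysis step now rests on vector-valued maximal estimates of Fefferman--Stein type rather than on term-by-term summation; controlling the $L_p(\ell_q)$ quasi-norm forces one to dominate the $p$- and $q$-integrabilities simultaneously, which is exactly why the smoothness/cancellation threshold is raised to $u > \max(s, \sigma^n_{p,q} - s)$ with $\sigma^n_{p,q} \ge \sigma^n_p$. Calibrating $u$ so that a single choice makes both the synthesis and the analysis estimates valid --- and verifying that the Daubechies wavelets of \eqref{2.12}--\eqref{2.13} meet the atom and the local-means requirements at once --- is the technical heart of the proof.
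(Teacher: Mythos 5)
The first thing to note is that the paper itself contains no proof of Proposition \ref{P2.4}: as Remark \ref{R2.5} states, the proposition is quoted verbatim from \cite[Proposition 1.11, pp.\,9--10]{T20} and used as a tool. So your attempt can only be compared with the proof in the cited literature (which goes back to \cite{T06} and \cite{T08}), and there your outline is indeed the standard route: synthesis via the atomic decomposition theorem (the $\psi^j_{G,m}$ with $G\in G^j$, $j\ge 1$, are $L_\infty$--normalized $C^u$ atoms whose moments up to order $u-1$ vanish, so that $u>s$ covers the smoothness requirement and $u>\sigma^n_p-s$ the moment requirement), analysis via local means with the kernels $\psi_G$, and vector--valued maximal inequalities of Fefferman--Stein type raising the threshold to $\sigma^n_{p,q}$ in the $F$--case. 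Two imprecisions are worth flagging but are not fatal: moment conditions on atoms are needed whenever $s\le \sigma^n_p$, hence also for $p\ge 1$ and $s<0$, not only ``when $p<1$''; and $\psi_F$ has \emph{no} vanishing moments, so the $j=0$ terms (where $G=(F,\dots,F)$ is admitted) must play the role of the distinguished kernel $k_0$ in the local--means characterization, which requires no cancellation.

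The genuine gap is in your uniqueness step. Daubechies wavelets are compactly supported and $C^u$ but they do not belong to $S(\rn)$, so an element $f\in S'(\rn)$ cannot simply be ``tested against the $\psi^j_{G,m}$'', and unconditional convergence of \eqref{2.28} in $S'(\rn)$ gives continuity of the pairing only against Schwartz functions; it does not license term--by--term pairing with $\psi^{j'}_{G',m'}$. To make your argument work one must (a) give a meaning to the dual pairing $\big(f,\psi^j_{G,m}\big)$ for $f\in \Bs(\rn)$ with arbitrary $s\in\real$, $0<p,q\le\infty$, and (b) justify the interchange of the sum and this pairing, for instance by showing that the series converges in a topology stronger than that of $S'(\rn)$ in which the functionals $g\mapsto (g,\psi^{j'}_{G',m'})$ are continuous. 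This is precisely the point where the proofs behind \cite[Proposition 1.11]{T20} spend their care (see the discussion of the dual pairing in \cite{T08}); orthonormality of $\{2^{jn/2}\psi^j_{G,m}\}$ in $L_2(\rn)$ alone does not settle it, since neither $f$ nor the partial sums need lie in $L_2(\rn)$. Beyond this, your proposal remains a plan rather than a proof: the two--sided estimates that constitute the analysis and synthesis steps are correctly attributed to the right machinery but are deferred, as you yourself acknowledge.
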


\begin{remark}   \label{R2.5}
This coincides with the relevant parts of \cite[Proposition 1.11, pp.\,9--10]{T20}. There one finds also related references. We use
the above substantial  assertion as a tool. This may explain why we did not promote this proposition to a theorem.
\end{remark}

\section{Pseudodifferential operators}    \label{S3}
\subsection{Preliminaries}    \label{S3.1}
Let $T_\tau$,
\begin{\eq}   \label{3.1}
(T_\tau f)(x) = \int_{\rn} e^{ix \xi} \tau (x,\xi) \wh{f} (\xi) \, \di \xi, \qquad x\in \rn,
\end{\eq}
be a pseudodifferential operator of the H\"{o}rmander class $\Psi^\sigma_{1,\delta} (\rn)$ with $\sigma \in \real$ and $0 \le \delta
\le 1$ where the symbol $\tau (x,\xi) \in C^\infty (\real^{2n} )$ satisfies for some constants $c_{\alpha, \gamma} \ge 0$,
\begin{\eq}   \label{3.2}
\big| D^\alpha_x D^\gamma_\xi \tau (x, \xi) \big| \le c_{\alpha, \gamma} (1+ |\xi|)^{\sigma - |\gamma| + \delta |\alpha|}, \qquad
x\in \rn, \quad \xi \in \rn,
\end{\eq}
$\alpha \in \nat^n_0$, $\gamma \in \nat^n_0$. Here $\nat^n_0 = \{ x\in \rn: \, x=(x_1, \ldots, x_n ), \, x_k \in \no \}$. To be in
agreement with the literature we changed the roles of $F$ and $F^{-1}$ in \eqref{1.3}. But this is immaterial. These classes of
operators attracted a lot of attention since the 1970s (and even earlier). The standard references are \cite{Tay81}, \cite{Hor85}
amd the related parts of \cite{Ste93}. There one finds basic assertions and many applications. They combine multiplications, partial
differential equations, their inverses (if exist) and related integral operators. Mapping properties of these operators in the spaces
$\As (\rn)$ according to \eqref{1.5} ($p<\infty$ for $F$--spaces) have been studied in the 1980s and 1990s based on the technicalities
available at that time.  Related references will be given below in Remark \ref{R3.2}. We return to this topic in the context of wavelet
expansions as described in Section \ref{S2.2}. We hope that the interplay of the miraculous properties of wavelets with the conditions
\eqref{3.2} for the symbols sheds new light on this theory. On the other hand, as already indicated in the Introduction, it is the
main concern of this paper to develop a spectral theory of the Fourier operators $F_\tau$ according to \eqref{1.1} based on the 
composition \eqref{1.4}.

We collect a few classical assertions about pseudodifferential operators. If
\begin{\eq}   \label{3.3}
T_{\tau_1} \in \Psi^{\mu_1}_{1, \delta} (\rn) \quad \text{and} \quad T_{\tau_2} \in \Psi^{\mu_2}_{1, \delta} (\rn) \quad \text{with $\mu_1, \mu_2 \in \real$ and $0\le \delta <1$}
\end{\eq}
then one has for the composition operator
\begin{\eq}    \label{3.4}
T_\tau = T_{\tau_1} \circ T_{\tau_2} \in \Psi^{\mu_1 + \mu_2}_{1, \delta} (\rn).
\end{\eq}
This is one of the crucial observations of the theory of pseudodifferential operators covered by \cite[pp.\,71, 94]{Hor85}. It is no
longer true for the so--called {\em exotic} class $\Psi^\mu_{1,1} (\rn)$. Recall that
\begin{\eq}   \label{3.5}
I_{\vr} \As (\rn) = A^{s+\vr}_{p,q} (\rn), \qquad \vr \in \real,
\end{\eq} 
for $A \in \{B,F \}$, $s\in \real$ and $0<p,q \le \infty$, where $I_{\vr}$ is the well--known lift
\begin{\eq}   \label{3.6}
I_{\vr}f = \big( \langle \xi \rangle^{-\vr} \wh{f}\, \big)^\vee \quad \text{with} \quad \langle \xi \rangle = \big(1+ |\xi|^2 \big)^{1/2}, \quad \xi \in \rn, \quad \vr \in \real,
\end{\eq}
\cite[Theorem 1.22, p.\,16]{T20}. The observations \eqref{3.3}, \eqref{3.4} combined with $I_{\vr} \in \Psi^{-\vr}_{1, \delta} (\rn)$,
$0 \le \delta <1$ show that
\begin{\eq}   \label{3.7}
T_{\tau} \circ I_{\sigma} \in \Psi^0_{1,\delta} (\rn) \qquad \text{for} \quad T_{\tau} \in \Psi^\sigma_{1,\delta} (\rn), \quad \sigma \in \real, \quad 0 \le \delta <1.
\end{\eq}
Combined with \eqref{3.5} it follows that it is sufficient
for fixed $p,q$ and $0 \le \delta <1$ to concentrate on mapping properties of the operators
\begin{\eq}   \label{3.8}
T_\tau \in \Psi^0_{1,\delta} (\rn) \quad \text{in} \quad \As(\rn) \quad \text{for some fixed $s\in \real$}.
\end{\eq}
The situation is different for the exotic class $\Psi^0_{1,1} (\rn)$. But at least some rescue comes from the dual operator $T'$,
\begin{\eq}   \label{3.9}
(T  \vp, \psi) = (\vp, T' \psi) \qquad \text{with, say,} \quad \vp \in S(\rn), \quad \psi \in S(\rn).
\end{\eq}
It is a further basic assertion of the theory of pseudodifferential operators that
\begin{\eq}  \label{3.10}
T' \in \Psi^0_{1,\delta} (\rn) \qquad \text{if} \quad T \in \Psi^0_{1,\delta} (\rn) \quad \text{and} \quad 0 \le \delta <1.
\end{\eq}
We refer the reader to \cite[Theorem 18.1.7, pp.\,70, 94]{Hor85}. One may also consult \cite[Theorems 4.1, 4.2, p.\,45]{Tay81}. Again
there is no counterpart for the exotic class $\Psi^0_{1,1} (\rn)$. Nevertheless duals of exotic pseudodifferential operators will play
some role below. Then we give also more specific references. In connection with \eqref{3.9} we remark that one has
\begin{\eq}   \label{3.11}
T_\tau: \quad S(\rn) \hra S(\rn)
\end{\eq}
for all operators $T_\tau \in \Psi^0_{1,\delta} (\rn)$ with $0 \le \delta \le 1$. This again is a
classical assertion covered by \cite[pp.\,68, 94]{Hor85}. But it will be of some use for us to insert a short proof. Let $f\in S(\rn)$
and $\alpha \in \nat^n_0$. Then it follows from \eqref{3.1} that
\begin{\eq}   \label{3.12}
\begin{aligned}
x^\alpha (T_\tau f)(x) &= (-i)^{|\alpha|} \int_{\rn} \big( D^\alpha_\xi e^{ix \xi} \big) \, \tau (x, \xi) \, \wh{f} (\xi) \, \di \xi \\
&= i^{|\alpha|} \int_{\rn} e^{ix \xi} \, D^\alpha_\xi \big[ \tau (x,\xi) \wh{f} (\xi) \big] \, \di \xi
\end{aligned}
\end{\eq}
where $x^\alpha = \prod^n_{j=1} x^{\alpha_j}_j$, $x= (x_1, \ldots, x_n)$, $\alpha \in \nat^n_0$,  and
\begin{\eq}   \label{3.13}
D^\alpha_x (T_\tau f)(x) = \sum_{\beta + \gamma = \alpha} d_{\beta, \gamma} \int_{\rn} \big( D^\beta_x e^{ix \xi} \big) \cdot 
\big(D^\gamma_x \, \tau (x, \xi) \big) \wh{f} (\xi) \, \di \xi.
\end{\eq}
Then \eqref{3.11} follows from \eqref{3.2} and the usual (semi--)norms generating the topology in $S(\rn)$. 

\subsection{Mapping properties}   \label{S3.2}
After the above preparations we prove mapping properties of the pseudodifferential operators $T_\tau$ according to \eqref{3.1}, 
\eqref{3.2} belonging to the H\"{o}rmander class $\Psi^0_{1, \delta} (\rn)$, $0 \le \delta \le 1$.
\begin{theorem}   \label{T3.1}
Let
\begin{\eq}   \label{3.14}
\As (\rn) \qquad \text{with} \quad A\in \{B,F \}, \quad s\in \real \quad \text{and} \quad 0<p,q \le \infty
\end{\eq}
$(p<\infty$ for the $F$--spaces$)$ as introduced in Definition \ref{D2.1}.
Let either $T_\tau \in \Psi^0_{1,\delta} (\rn)$ with $0 \le \delta <1$ or $T_\tau \in \Psi^0_{1,1} (\rn)$ such that
$T'_\tau \in \Psi^0_{1,1} (\rn)$. Then $T_\tau$ generates a continuous mapping from $\As (\rn)$ into itself,
\begin{\eq}   \label{3.15}
T_\tau: \quad \As (\rn) \hra \As (\rn).
\end{\eq}
\end{theorem}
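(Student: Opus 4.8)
The plan is to pass to the wavelet side via Proposition \ref{P2.4} and to reduce the claim to the boundedness of an associated matrix on the sequence spaces $b^s_{p,q}(\rn)$ and $f^s_{p,q}(\rn)$. Fix the wavelet order $u$ large enough that both \eqref{2.27} and \eqref{2.31} hold for the given $s,p,q$. For $f\in\As(\rn)$ expanded as in \eqref{2.28}/\eqref{2.32} one has $T_\tau f=\sum \lambda^{j,G}_m\, T_\tau\psi^j_{G,m}$, so by Proposition \ref{P2.4} it suffices to show that
\[
(A\lambda)^{j',G'}_{m'}=\sum_{j,G,m}\lambda^{j,G}_m\,2^{j'n}\big(T_\tau\psi^j_{G,m},\psi^{j'}_{G',m'}\big)
\]
maps $a^s_{p,q}(\rn)$ into itself ($a\in\{b,f\}$, $p<\infty$ for $f$).

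First I would establish \emph{molecular} estimates for the images $\mu^j_{G,m}:=T_\tau\psi^j_{G,m}$. Since $\wh{\psi^j_{G,m}}$ is concentrated at frequencies $|\xi|\sim 2^j$ (for $j\ge1$), integration by parts in $\xi$ in \eqref{3.1}, using the order-zero bounds $|D^\gamma_\xi\tau|\lesssim(1+|\xi|)^{-|\gamma|}$ from \eqref{3.2}, yields the spatial localization $|\mu^j_{G,m}(x)|\le c_N(1+2^j|x-2^{-j}m|)^{-N}$ for every $N$. Differentiating \eqref{3.1} in $x$ and using that each $x$-derivative of $\tau$ costs only $(1+|\xi|)^{\delta|\alpha|}\le 2^{j\delta|\alpha|}$ while the factors $\xi^{\beta}$ cost $2^{j|\beta|}$, one gets $|D^\beta_x\mu^j_{G,m}(x)|\le c_{\beta,N}\,2^{j|\beta|}(1+2^j|x-2^{-j}m|)^{-N}$; this smoothness holds for all $0\le\delta\le1$. (The finitely overlapping low-frequency terms $j=0$, $G\in G^0$ are harmless and handled separately.)

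Next I would convert these into almost-diagonal estimates for the entries $2^{j'n}(\mu^j_{G,m},\psi^{j'}_{G',m'})$. In the regime $j'\ge j$ (test wavelet at the finer scale) one Taylor-expands $\mu^j_{G,m}$ and uses the $u$ vanishing moments \eqref{2.13} of $\psi^{j'}_{G',m'}$, gaining a factor $2^{-(j'-j)u}$ together with spatial decay in $2^{-j}m-2^{-j'}m'$; this works for all $\delta$. The delicate regime is $j'<j$, where one needs cancellation from $\mu^j_{G,m}$ itself. Here the hypotheses split: for $0\le\delta<1$ the mild $x$-dependence of the symbol preserves enough cancellation (each integration by parts in the oscillatory integral for the entry yields a net gain, precisely because $\delta<1$), giving decay $2^{-(j-j')\kappa}$ with some $\kappa>0$; for the exotic case $\delta=1$ this fails for $T_\tau$ directly, and I would instead use $(\mu^j_{G,m},\psi^{j'}_{G',m'})=(\psi^j_{G,m},T'_\tau\psi^{j'}_{G',m'})$. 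Since $T'_\tau\in\Psi^0_{1,1}(\rn)$, the molecular estimates above apply to $T'_\tau\psi^{j'}_{G',m'}$ at the coarser scale $j'$, and testing against the finer wavelet $\psi^j_{G,m}$ through its $u$ vanishing moments supplies the missing decay. In both cases one arrives at entries bounded by $2^{-|j-j'|\kappa}$ times an integrable spatial-separation factor, with $\kappa$ as large as $u$ permits.

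Finally, with $u$ chosen so that $\kappa$ exceeds the thresholds governed by $s$ and $\sigma^n_p$ (resp.\ $\sigma^n_{p,q}$), the standard boundedness of almost-diagonal matrices on $b^s_{p,q}(\rn)$ and $f^s_{p,q}(\rn)$ gives $A\colon a^s_{p,q}(\rn)\to a^s_{p,q}(\rn)$, and transporting back through Proposition \ref{P2.4} yields \eqref{3.15}. I expect the main obstacle to be the downward regime $j'<j$ in the exotic class $\delta=1$: there $T_\tau$ need not preserve any cancellation, the molecular argument breaks, and everything hinges on shifting the derivatives onto the dual operator via \eqref{3.9}—which is exactly why the hypothesis $T'_\tau\in\Psi^0_{1,1}(\rn)$ is imposed.
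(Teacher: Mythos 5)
Your proposal is correct, but it takes a genuinely different route from the paper's. The paper deliberately avoids all cancellation analysis for the images $T_\tau\psi^j_{G,m}$: in Step~1 it works only under the extra restriction \eqref{3.16}, where atomic decompositions require \emph{no} moment conditions, so it suffices to split $T_\tau\psi^j_{G,m}$ into $L_\infty$--normalized smooth localized bumps (precisely your ``molecular'' estimates, which indeed hold for all $0\le\delta\le1$ with no hypothesis on the dual); the remaining parameters are then reached by operator--level duality \eqref{3.28}--\eqref{3.30} --- this, not an entrywise estimate, is where the paper uses the hypothesis $T'_\tau\in\Psi^0_{1,1}(\rn)$ --- followed by two rounds of interpolation, the $\pm$--method \eqref{3.31} for the $F$--scale and real interpolation \eqref{3.34} plus one more duality for the $B$--scale. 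You instead estimate the full wavelet matrix for arbitrary $s,p,q$ at once: vanishing moments \eqref{2.13} of the finer wavelet in the regime $j'\ge j$; integration by parts in the oscillatory integral in the regime $j'<j$, where the net gain per step is $\max\bigl(2^{-j(1-\delta)},2^{-(j-j')}\bigr)$ and hence requires $\delta<1$; and for $\delta=1$ the entrywise flip $(T_\tau\psi^j_{G,m},\psi^{j'}_{G',m'})=(\psi^j_{G,m},T'_\tau\psi^{j'}_{G',m'})$ via \eqref{3.9}, after which the finer wavelet supplies the moments. Then you invoke the almost--diagonality theorem of \cite{FrJ90} (in its inhomogeneous form) on $b^s_{p,q}(\rn)$ and $f^s_{p,q}(\rn)$. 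What your route buys: uniformity in all parameters, no interpolation and no duality of spaces (so no detour through the Banach range), and a transparent localization of exactly why the dual hypothesis is needed in the exotic case. What it costs: the delicate coarse--regime estimates, where you must extract scale decay and spatial separation decay simultaneously, plus the justification (shared with the paper) that $T_\tau$ may be applied termwise to the wavelet expansion. Be aware that your argument is essentially the one of Torres \cite{Tor90} and Grafakos--Torres \cite{GrT99}, with wavelets replacing the $\varphi$--transform; the paper acknowledges this line in Remark~\ref{R3.2} and chose its atoms--plus--duality--plus--interpolation architecture precisely as an alternative to it.
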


\begin{proof}
{\em Step 1.} Let first, in addition,
\begin{\eq}   \label{3.16}
s> \sigma^n_p \ \text{for $B$--spaces} \qquad \text{and} \qquad s> \sigma^n_{p,q} \ \text{for $F$--spaces}
\end{\eq}
with $\sigma^n_p$ and $\sigma^n_{p,q}$ as in \eqref{2.26}. Then the elements belonging to $\As (\rn)$ can be characterized by atomic
representations where the corresponding $L_\infty$--normalized sufficiently smooth atoms $a_{j,m} (x)$, $j\in \no$, $m\in \zn$, are
supported by $d Q_{j,m}$ with $d>1$ and $Q_{j,m}$ as in \eqref{2.20}. The restriction \eqref{3.16} ensures that no moment conditions
for the atoms are requested. The related sequence spaces are similar as in Definition \ref{D2.3} without the summation over $G\in G^j$.
Details and references (including precise formulations) may be found in \cite[Section 1.1.2, pp.\,4--5]{T08}. Our method is in 
principle rather straightforward. We rely on the wavelet representation
\begin{\eq}   \label{3.17}
f = \sum^\infty_{j=0} \sum_{G \in G^j} \sum_{m\in \zn} 2^{jn} \, \big( f, \psi^j_{G,m} \big) \, \psi^j_{G,m}
\end{\eq}
according to Proposition \ref{P2.4}
and ask whether $T_\tau \psi^j_{G,m}$ can be reduced to admitted 
 $L_\infty$--normalized atoms (without moment conditions). Then the coefficients
of the resulting atomic expansions originate from their counterparts $2^{jn} (f, \psi^j_{G,m} )$ and belong, as a consequence, to the
desired sequence spaces. This justifies 
\begin{\eq}   \label{3.18}
T_\tau f = \sum^\infty_{j=0} \sum_{G \in G^j} \sum_{m\in \zn} 2^{jn} \, \big( f, \psi^j_{G,m} \big) \,T_\tau \psi^j_{G,m}
\end{\eq}
and \eqref{3.15} under the restriction \eqref{3.16}. Let $j\in \no$. Then it follows from \eqref{2.2} and \eqref{2.16} that
\begin{\eq}   \label{3.19}
\big( \psi^j_{G,m} \big)^{\wedge} (\xi) =  2^{-jn} \, e^{-i 2^{-j}m \xi} \big( \psi^0_{G,0} \big)^{\wedge} (2^{-j} \xi), \qquad \xi \in
\rn.
\end{\eq}
Inserted in \eqref{3.1} one obtains
\begin{\eq}   \label{3.20}
\big(T_\tau \psi^j_{G,m} \big)(x) = \int_{\rn} e^{i 2^j x \xi - im \xi} \, \tau (x, 2^j \xi) \, \big( \psi^0_{G,0} \big)^{\wedge} (\xi) \, \di \xi.
\end{\eq}
Let $\psi$ be a compactly supported $C^\infty$ in $\rn$ such that
\begin{\eq}   \label{3.21}
1 = \sum_{k\in \zn} \psi (x-k), \qquad x \in \rn.
\end{\eq}
Let
\begin{\eq}   \label{3.22}
b^{G,k}_{j,m} (x) = \psi(x-k) \int_{\rn} e^{ix \xi} \, \tau \big( 2^{-j} (x+m), 2^j \xi \big) \, \big( \psi^0_{G,0} \big)^\wedge (\xi) \,\di \xi,
\end{\eq}
$x\in \rn$, where $j\in \no$, $G \in G^j$, $k\in \zn$, $m\in \zn$. If $j \in \nat$ in \eqref{3.19} then at
least one $G_l$ in $G = (G_1, \ldots, G_n )\in G^j$ is an $M$ and it follows  from \eqref{2.12}, \eqref{2.13} and \eqref{2.16} that
\begin{\eq}   \label{3.23}
\big| D^\alpha_\xi \big( \psi^0_{G,0} \big)^\wedge (\xi) \big| \le c\, \frac{|\xi|^v}{(1+ |\xi|)^w}, \qquad \xi \in \rn, \quad 0 \le
|\alpha| \le L,
\end{\eq}
where $L\in \no$, $v\in \nat$ and $w\in \nat$ are (independently) at our disposal. If $j =0$ in \eqref{3.19} then one has \eqref{3.23}
with $L\in \no$, $v=0$ and $w\in \nat$. In any case it follows from  \eqref{3.2} that the integral in \eqref{3.22} makes sense if $v$
and $w$ are chosen  appropriately. This applies also to what follows. Furthermore one has by \eqref{3.2} with $\sigma =0$ and adapted
modifications of \eqref{3.12}, \eqref{3.13} that the integrals in \eqref{3.22} are sufficiently smooth and that the needed derivatives
decay uniformly in $j\in \no$ and $m\in \zn$ strongly enough to justify the subsequent arguments. This is the point where one needs
\eqref{3.23} for given $v\in \nat$ if $j\in \nat$ and for given $w$ if $j\in \no$, in addition to $L\in \no$. This applies also to 
$x^\alpha b^{G.k}_{j,m} (x)$ in \eqref{3.22} and by the support property of $\psi (x-k)$ to
\begin{\eq}   \label{3.24}
\langle k \rangle^D b^{G,,k}_{j,m} (x), \qquad \langle k \rangle = (1 + |k|^2)^{1/2}, \quad k\in \zn,
\end{\eq}
where $D>0$ is at our disposal. By \eqref{3.22} one can rewrite \eqref{3.20} as
\begin{\eq}   \label{3.25}
\big( T_\tau \psi^j_{G,m} \big)(x) = \sum_{k\in \zn} a^{G,k}_{j,m} (x) \qquad \text{with} \quad a^{G,k}_{j,m} (x) = b^{G,k}_{j,m} (2^j x -m),
\end{\eq}
where $j\in \no$, $m\in \zn$ and $G \in G^j$. Using in addition \eqref{3.24} it follows that
\begin{\eq}   \label{3.26}
\langle k \rangle^D a^{G,k}_{j,m} (x) \qquad \text{are $L_\infty$--normalized}
\end{\eq}
classical  atoms for any prescribed $D>0$ located at $2^{-j} (m+k)$ (no moment conditions are needed for the spaces $\As (\rn)$ with
$s$ as in \eqref{3.16}). Recall that $\As (\rn)$ and the related sequence spaces are $u$--Banach spaces with $u= \min(1,p,q)$. Choosing
now $D$ in \eqref{3.26} sufficiently large, then \eqref{3.18} is an atomic decomposition in $\As (\rn)$ based on the same coefficients
$2^{jn} (f, \psi^j_{G,m} )$ as in \eqref{3.17}. This proves \eqref{3.15} under the restriction \eqref{3.16}.
\cm
{\em Step 2.} We extend the above assertion to the remaining cases. Let as usual
\begin{\eq}    \label{3.27}
\frac{1}{p} + \frac{1}{p'} = \frac{1}{q} + \frac{1}{q'} = 1 \qquad \text{where} \quad 1 \le p,q \le \infty
\end{\eq}
and $s\in \real$. Then one has in the framework of the dual pairing $\big( S(\rn), S'(\rn) \big)$ the duality
\begin{\eq}   \label{3.28}
\Bs (\rn)' = B^{-s}_{p',q'} (\rn) \qquad \text{if} \quad 1 \le p,q <\infty
\end{\eq}
and
\begin{\eq}   \label{3.29}
\Fs (\rn)' = F^{-s}_{p', q'} (\rn) \qquad \text{if} \quad 1<p<\infty, \quad 1\le q <\infty.
\end{\eq}
One may consult \cite[Theorems 2.11.2, p.\,178]{T83}, complemented by \cite[Proposition, p.\,20]{RuS96} (as far as $q=1$
in \eqref{3.29} is concerned). There one finds further assertions about dual spaces and related references. Let $0\le \delta <1$ and
$s>0$. Then it follows from \eqref{3.28}, \eqref{3.10} and Step 1 that
\begin{\eq}   \label{3.30}
\begin{aligned}
\| T_\tau f \, | B^{-s}_{p',q'} (\rn) \| & = \sup \big\{ |(T_\tau f,g)|: \ \| g \, | \Bs (\rn) \| \le 1 \big\} \\
&= \sup \big\{ |(f, T'_\tau g)|: \ \|g \, | \Bs (\rn) \| \le 1 \big\} \\
&\le c \, \|f \, | B^{-s}_{p', q'} (\rn) \|.
\end{aligned}
\end{\eq}
Similarly for the $F$--spaces based on \eqref{3.29}. This can be extended to the exotic class $\Psi^0_{1,1} (\rn)$ where we assumed that both $T_\tau$ and $T'_\tau$ belong to $\Psi^0_{1,1} (\rn)$. The remaining cases can now be incorporated by interpolation. 
According to \cite[Theorem 8.5, pp.\,98, 134]{FrJ90} one has
\begin{\eq}   \label{3.31}
\Fs (\rn) = \big\langle F^{s_0}_{p_0, q_0} (\rn), F^{s_1}_{p_1, q_1} (\rn), \theta \big\rangle
\end{\eq}
for the so--called $\pm$method of interpolation theory where $0<\theta <1$, $s_0 \in \real$, $s_1 \in \real$, $0<p_0, q_0, p_1, q_1
\le \infty$ and
\begin{\eq}   \label{3.32}
s= (1-\theta)s_0 + \theta s_1, \quad \frac{1}{p} = \frac{1-\theta}{p_0} + \frac{\theta}{p_1}, \quad  
\frac{1}{q} = \frac{1-\theta}{q_0} + \frac{\theta}{q_1},
\end{\eq}
(including $F^s_{\infty,q} (\rn)$ what will be of some use for us below). First we fix $1<p_0 = p_1 =p <\infty$ and apply \eqref{3.31},
\eqref{3.32} to what is already known. This can be characterized by line segments in an $\big( \frac{1}{q}, s \big)$--diagram 
connecting related regions covered by \eqref{3.16} and \eqref{3.29}. This show that the desired mapping property for $T_\tau$ is also
valid for
\begin{\eq}   \label{3.33}
T_\tau: \quad \Fs (\rn) \hra \Fs (\rn), \qquad s\in \real, \quad 1<p<\infty, \quad0<q \le \infty.
\end{\eq}
Secondly  we fix $0<q_0 = q_1 =q \le \infty$ and interpolate the $F$--spaces covered by \eqref{3.16} and \eqref{3.33} what again can
be illuminated in an $\big( \frac{1}{p}, s \big)$--diagram. This proves \eqref{3.15} for $A=F$. The  related assertion for the spaces
$\Bs (\rn)$ with $p<\infty$ follows from the real interpolation
\begin{\eq}   \label{3.34}
\Bs (\rn) = \big( F^{s_0}_{p,q} (\rn), F^{s_1}_{p,q} (\rn) \big)_{\theta,q}, \qquad s_0 \not= s_1.
\end{\eq}
The duality \eqref{3.28} with $p=1$ and a further real interpolation extends the desired assertion to the remaining spaces 
$B^s_{\infty,q} (\rn)$, $0<q \le \infty$.
\end{proof}

\begin{remark}   \label{R3.2}
A different proof of Theorem \ref{T3.1} for $T_\tau \in \Psi^0_{1, \delta} (\rn)$ with $0 \le \delta <1$ may be found in \cite[Theorem 6.2.2,
pp.\,258--261]{T92} based on \cite{Pai83} and \cite{T87}. It relies on local means and related mappings in $\As (\rn)$ for large $s$.
Afterwards one can extend this assertion as in Step 2 of the above proof. But alternatively one can rely on the lifts $I_{\vr} \in
\Psi^{-\vr}_{1, \delta} (\rn)$, $0\le \delta <1$ in \eqref{3.5}, \eqref{3.6} and \eqref{3.4} specified to
\begin{\eq}   \label{3.35}
I_{-\vr} \circ T_\tau \circ I_{\vr} \in \Psi^0_{1,\delta} (\rn) \quad \text{if, and only if,} \quad T_\tau \in \Psi^0_{1,\delta} (\rn),
\quad \delta <1.
\end{\eq}
This does not work for the exotic class $\Psi^0_{1,1} (\rn)$. However Step 1 of the proof shows that one has
\begin{\eq}   \label{3.36}
T_\tau: \quad \Fs (\rn) \hra \Fs (\rn) \quad \text{if} \quad 0<p<\infty, \quad 0<q\le \infty, \quad s > \sigma^n_{p,q},
\end{\eq}
for any $T_\tau \in \Psi^0_{1,1} (\rn)$ without any assumption about its dual. This had already been observed in \cite{Run85}.
On the other hand it is well known that there are operators
$T_\tau \in \Psi^0_{1,1} (\rn)$ which are not continuous in $L_2 (\rn)$ (and $L_p (\rn)$ with $1<p<\infty$). An example may be found in
\cite[pp.\,272--274]{Ste93}. The significant role played by the assumption that both $T_\tau$ and its dual $T'_\tau$ belong to $\Psi^0_{1,1}
(\rn)$ had been discussed in \cite{Bou88} . A modification goes back to \cite{Hor88}, \cite{Hor89} dealing with mapping properties of
related operators $T_\tau \in \Psi^0_{1,1} (\rn)$ in the spaces $H^s (\rn) = H^s_2 (\rn) = F^s_{2,2} (\rn)$ with $s<0$. This has been extended
in \cite{Tor90} to the spaces $\As (\rn)$ in \eqref{3.14}. A related homogeneous version based on Frazier--Jawerth frames according to
\cite{FrJ90} may be found in \cite{GrT99}.
One may also consult \cite[\S 5]{Tor91}. In other words, the above theorem
is more or less known since 3 decades. But at least some arguments of our approach might be new and efficient. They shed some light
on the interplay of wavelets, especially \eqref{3.23}, and the condition \eqref{3.2} for the underlying symbols.
\end{remark}

Step 1 of the proof of the above theorem relies on the observation \eqref{3.25} where $D>0$ in \eqref{3.26} is at our disposal. But
this can be used in further function spaces admitting  the wavelet representation \eqref{3.17} and corresponding expansions by atoms 
without moment conditions if the underlying sequence spaces for the coefficients are essentially the same (up to an additional
summation over $G \in G^j$). Maybe the most distinguished (inhomogeneous unweighted) examples are the spaces
\begin{\eq}   \label{3.37}
F^s_{\infty,q} (\rn), \qquad s\in \real, \quad 0<q \le \infty,
\end{\eq}
with $F^s_{\infty, \infty} (\rn) = B^s_{\infty, \infty} (\rn)$. As for the usual Fourier--analytical definition, related references
and discussions one may consult \cite[Section 1.1.1, pp.\,1--5]{T20}. This will not be repeated here (these spaces do not play any role
in what follows below). We fix the outcome and indicate where the related ingredients for its proof can be found.

\begin{proposition}   \label{P3.3}
Theorem \ref{T3.1} remains valid for the spaces $F^s_{\infty,q} (\rn)$, $s\in \real$, $0<q \le \infty$.
\end{proposition}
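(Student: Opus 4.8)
The plan is to follow the two--step scheme of the proof of Theorem \ref{T3.1}, now built on the wavelet and atomic characterizations of $F^s_{\infty,q}(\rn)$. As emphasized in the paragraph preceding the proposition, the only genuinely new ingredient is the analogue of \emph{Step 1}; the passage to arbitrary $s$ (\emph{Step 2}) will be a formal repetition of the duality and interpolation arguments already used for the $\As(\rn)$ with $p<\infty$.

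First I would prove \eqref{3.15} for $F^s_{\infty,q}(\rn)$ under the extra assumption $s>\sigma^n_{\infty,q}$. For such $s$ these spaces admit both the wavelet expansion \eqref{3.17} and an atomic decomposition by $L_\infty$--normalized atoms without moment conditions, the associated (Carleson--type) sequence space $f^s_{\infty,q}(\rn)$ being the one recorded in \cite[Section 1.1.1]{T20}. With this in hand the computation \eqref{3.19}--\eqref{3.26} carries over literally: by \eqref{3.22}--\eqref{3.25} one writes $T_\tau\psi^j_{G,m}=\sum_{k\in\zn}a^{G,k}_{j,m}$, where $\langle k\rangle^D a^{G,k}_{j,m}$ are $L_\infty$--normalized atoms located at $2^{-j}(m+k)$ for any prescribed $D>0$, so that \eqref{3.18} becomes an atomic decomposition with the same coefficients $2^{jn}(f,\psi^j_{G,m})$. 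The point that has to be checked---and the main obstacle---is that the coefficient array obtained from $\{2^{jn}(f,\psi^j_{G,m})\}$ by the $k$--shift and the weight $\langle k\rangle^{-D}$ again lies in $f^s_{\infty,q}(\rn)$. For $p<\infty$ this is an elementary $\ell_p$--estimate, but $f^s_{\infty,q}$ carries a supremum over dyadic cubes, so here the required stability under such weighted translations must be drawn from the (molecular) atomic theory of $F^s_{\infty,q}(\rn)$; taking $D$ large relative to $n$, $s$ and $q$ then absorbs the shifts and yields $T_\tau\colon F^s_{\infty,q}(\rn)\hra F^s_{\infty,q}(\rn)$ for $s>\sigma^n_{\infty,q}$.

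It remains to drop the restriction on $s$, and here I would argue exactly as in \eqref{3.30}. Applying the first step to $T'_\tau$---which lies in $\Psi^0_{1,\delta}(\rn)$ by \eqref{3.10} when $0\le\delta<1$, and by hypothesis in the exotic case---yields $T'_\tau\colon F^\sigma_{1,q'}(\rn)\hra F^\sigma_{1,q'}(\rn)$ for all $\sigma>0$ (here $p=1<\infty$, so this is covered). The duality $\big(F^{-s}_{1,q'}(\rn)\big)'=F^s_{\infty,q}(\rn)$, valid for $1<q\le\infty$, then gives $T_\tau\colon F^s_{\infty,q}(\rn)\hra F^s_{\infty,q}(\rn)$ for all $s<0$. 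Finally the $\pm$--method \eqref{3.31}, \eqref{3.32}, which by construction includes the scale $p=\infty$, connects the region $s>\sigma^n_{\infty,q}$ with the region $s<0$ along line segments in the $\big(\frac{1}{q},s\big)$--diagram; a segment joining a large--$s$ endpoint with $q_0<1$ to a negative--$s$ endpoint with $q_1>1$ sweeps out the remaining values $0<q\le1$ as well, so all $s\in\real$ and $0<q\le\infty$ are covered. For $0\le\delta<1$ one may alternatively reach arbitrary $s$ straight from the first step by conjugating $T_\tau$ with the lift as in \eqref{3.35}, using that $I_{\vr}$ maps $F^s_{\infty,q}(\rn)$ isomorphically onto $F^{s+\vr}_{\infty,q}(\rn)$.
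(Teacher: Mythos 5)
Your proposal is correct and follows essentially the same three--step route as the paper's own proof: the wavelet--to--atom argument of Step 1 for $s>\sigma^n_q=\sigma^n_{\infty,q}$, the duality $F^{-s}_{1,q'}(\rn)' = F^s_{\infty,q}(\rn)$ from \eqref{3.40} to reach $s<0$, and finally the $\pm$--interpolation \eqref{3.31}, \eqref{3.32} with $p_0=p_1=p=\infty$. The only real difference is how Step 1 is grounded: the paper obtains the needed wavelet expansions and atomic decompositions without moment conditions (including the Carleson--type sequence space and its stability under the weighted $k$--shifts) from the hybrid--space identification $F^s_{\infty,q}(\rn)=L^0\Fs(\rn)$ in \eqref{3.38} together with \cite{T14}, whereas you appeal directly to the atomic/molecular theory of $F^s_{\infty,q}(\rn)$ --- both are legitimate sources for the same facts.
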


\begin{proof}
The case $F^s_{\infty, \infty} (\rn) = B^s_{\infty, \infty} (\rn)$ is already covered by Theorem \ref{T3.1}. Otherwise one can rely on
the observation that
\begin{\eq}   \label{3.38}
F^s_{\infty,q} (\rn) = L^0 \Fs (\rn), \qquad s\in \real, \quad 0<p<\infty, \quad 0<q<\infty,
\end{\eq}
\cite[Proposition 1.18, pp.\,12--13]{T20} are special so--called hybrid spaces $L^r\! \Fs (\rn)$. The theory of these spaces has been
developed in \cite{T14}. There one finds wavelet expansions  in \cite[Theorem 3.26, p.\,64]{T14} and atomic representations in
\cite[Theorem 3.33, p.\,67]{T14}. Using \eqref{3.38} it comes out that no moment conditions for the underlying atoms  in the spaces
\begin{\eq}   \label{3.39}
F^s_{\infty,q} (\rn), \qquad 0<q<\infty, \quad s> \sigma^n_q,
\end{\eq}
are needed. Then one argue as in Step 1 of the proof of that above theorem. This proves the proposition for the spaces in \eqref{3.39}.
The duality
\begin{\eq}   \label{3.40}
F^s_{1, q} (\rn)' = F^{-s}_{\infty, q'} (\rn), \qquad s\in \real, \quad 1\le q <\infty, \quad \frac{1}{q} + \frac{1}{q'} =1,
\end{\eq}
according to \cite[(1.24), p.\,5]{T20} (and the references given there) shows that one can extend the desired assertion in the same
way as in Step 2 of the proof of the above theorem to the spaces
\begin{\eq}   \label{3.41}
F^s_{\infty,q} (\rn), \qquad s<0, \quad 1<q<\infty.
\end{\eq}
Finally one applies the interpolation \eqref{3.31}, \eqref{3.33} with $p_0 = p_1 = p = \infty$ in the same way as there.
\end{proof}

\begin{remark}  \label{R3.4}
There are further examples. In particular, Step 1 of the proof of the above theorem and also \eqref{3.36} can be extended to some
hybrid spaces $\LA (\rn)$ based on the above references. Duality does not work. But if $\delta <1$ (excluding the exotic case) then 
one can rely on \eqref{3.35} and the counterpart of the lifts in \eqref{3.5} for the spaces $\LA (\rn)$ according to \cite[Theorem
3.72, p.\,102]{T14}. It came out quite recently that it is adequate to reformulate the hybrid spaces as
\begin{\eq}   \label{3.42}
\LA (\rn) = \Lambda^{\vr} \As (\rn), \qquad -n <\vr <0, \quad 0<p<\infty, \quad r = \frac{\vr}{p},
\end{\eq}
$0<q \le \infty$, and to collect them together with their Morrey counterparts $\Lambda_{\vr} \Fs (\rn) = \Lambda^{\vr} \Fs (\rn)$ and
\begin{\eq}   \label{3.43}
\Lambda_{\vr} \Bs (\rn) = \big( \Lambda^{\vr} F^{s_1}_{p,q} (\rn), \Lambda^{\vr} F^{s_2}_{p,q} (\rn) \big)_{\theta,q}
\end{\eq}
$-\infty <s_1 <s_2 <\infty$, $s= (1-\theta) s_1 + \theta s_2$, into so--called $\vr$--clans, \cite[(2.46), Definition 2.15, 
(3.14)]{HaT21}. The above comments about mapping properties of pseudodifferential  operators, based on \eqref{3.18}, \eqref{3.25},
\eqref{3.26}, apply to all these spaces without any additional efforts. On the other hand, pseudodifferential operators related to
the above spaces in their more traditional formulations have already been studied in the literature. Corresponding assertions,
discussions and references may be found in \cite[Chapter 5]{YSY10} and \cite[Section 3.7, pp.\,129--131]{Sic12}.
\end{remark}

\section{Fourier operators}    \label{S4}
\subsection{Preliminaries}   \label{S4.1}
First we recall some abstract notation and assertions for linear compact operators in quasi--Banach spaces.

\begin{definition}   \label{D4.1}
Let $T: \ A \hra B$ be a linear and continuous mapping from the quasi--Banach space $A$ into the the quasi--Banach space $B$. Then
the entropy number $e_k (T)$, $k\in \nat$, is the infimum of all $\ve >0$ such that
\begin{\eq}  \label{4.1}
T (U_A) \subset \bigcup^{2^{k-1}}_{j=1} ( b_j + \ve U_B) \quad \text{for some $b_1, \ldots, b_{2^{k-1}} \in B$,}
\end{\eq}
where $U_A = \{ a\in A: \ \|a \, | A \| \le 1 \}$ and $U_B = \{ b\in B: \ \|b \, | B \| \le 1 \}$.
\end{definition}

\begin{remark}   \label{R4.2}
Basic properties and related references may be found in \cite[Section 1.10, pp.\,55--58]{T06}. We only mention that the linear and
continuous mapping $T: \, A \hra B$ is compact if, and only if, $e_k (T) \to 0$ for $k \to \infty$.
\end{remark}

Let $B$ be a complex infinitely--dimensional quasi--Banach space and let $K: \ B\hra B$ be a linear compact operator. Then its spectrum
in the complex plane consists of the origin $0$ and an at most countably infinite numbers of non--zero eigenvalues of finite algebraic
multiplicity which may
accumulate only at the origin. Recall that the algebraic multiplicity of an eigenvalue $\lambda \not= 0$ of $K$ is the dimension of
\begin{\eq}   \label{4.2}
\big\{ b\in B: \ (K- \lambda \, \id )^k b=0 \ \text{for some $k\in \nat$} \big\}.
\end{\eq}
This well--known assertion is also covered by \cite[Theorem, p.\,5]{ET96}. Let $\{ \lambda_k (K) \}$ be the sequence of all non--zero
eigenvalues of $K$, repeated according to algebraic multiplicity and ordered so that
\begin{\eq}   \label{4.3}
|\lambda_1 (K)| \ge |\lambda_2 (K)| \ge \ldots \ge 0.
\end{\eq}
If $K$ has only $m <\infty$ distinct non--zero
eigenvalues and if $M$ is the sum of their algebraic multiplicities we put $\lambda_j (K) =0$ for
$j>M$. Let $e_k (K)$ be the entropy numbers of $K$. Then
\begin{\eq}   \label{4.4}
| \lambda_k (K)| \le \sqrt{2} \, e_k (K), \qquad k\in \nat,
\end{\eq}
is Carl's inequality proved in Banach spaces in \cite{Carl81} in a larger context. The alternative proof in
\cite{CaT80} was extended in \cite[Section 1.3.4, pp.\,18--22]{ET96} to quasi--Banach spaces. Further details  may be found in 
\cite[Theorem 6.25, p.\,197, Notes 6.77, p.\,212]{HT08}.

Let again $F$ be the Fourier transform in $S'(\rn)$ as introduced in \eqref{2.2}. We dealt in \cite{T21} with mapping properties of 
$F$ in the distinguished function spaces
\begin{\eq}   \label{4.5}
B^s_p (\rn) =B^s_{p,p} (\rn), \qquad s\in \real, \quad 0<p \le \infty,
\end{\eq}
as already mentioned in \eqref{2.10}, \eqref{2.11}. We repeat a few assertions on which we rely below. Let
\begin{\eq}   \label{4.6}
d^n_p = 2n \big( \frac{1}{p} - \frac{1}{2} \big), \qquad n\in \nat, \quad 0<p \le \infty.
\end{\eq}

\begin{proposition}   \label{P4.3}
{\em (i)} Let $1<p \le 2$, $s_1 > d^n_p$ and $s_2 <0$. Then
\begin{\eq}   \label{4.7}
F: \quad B^{s_1}_p (\rn) \hra  B^{s_2}_p (\rn)
\end{\eq}
is compact and
\begin{\eq}    \label{4.8}
e_k (F) \le c
\begin{cases}
k^{\frac{s_2}{n}} &\text{if $s_2 > d^n_p -s_1$}, \\
\big( \frac{k}{\log k} \big)^{\frac{s_2}{n}} (\log k)^{\frac{1}{p} - \frac{1}{2}} &\text{if $s_2 = d^n_p -s_1$}, \cm
k^{- \frac{s_1}{n} + 2(\frac{1}{p} - \frac{1}{2})} &\text{if $s_2 < d^n_p -s_1$},
\end{cases}
\end{\eq}
for some $c>0$ and $2 \le k \in \nat$.
\cm
{\em (ii)} Let $2 \le p<\infty$, $s_1 >0$ and $s_2 <d^n_p$. Then
\begin{\eq}  \label{4.9}
F: \quad B^{s_1}_p (\rn) \hra B^{s_2}_p (\rn)
\end{\eq}
is compact and
\begin{\eq}    \label{4.10}
e_k (F) \le c
\begin{cases}
k^{\frac{s_2}{n} - 2(\frac{1}{p} - \frac{1}{2})} &\text{if $s_2 > d^n_p -s_1 $}, \\
\big( \frac{k}{\log k} \big)^{-\frac{s_1}{n}} (\log k)^{\frac{1}{2} - \frac{1}{p}} &\text{if $s_2 = d^n_p -s_1 $}, \cm
k^{- \frac{s_1}{n}} &\text{if $s_2 < d^n_p -s_1$},
\end{cases}
\end{\eq}
for some $c>0$ and $2 \le k \in \nat$.
\end{proposition}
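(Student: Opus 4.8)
This statement is a recollection of results from \cite{T21}; I sketch the route I would take to prove it in a self--contained way. The plan is to reduce both the continuity and the entropy estimates to the behaviour of $F$ on single Littlewood--Paley blocks, and then, after passing to wavelet coefficients, to entropy numbers of a block--diagonal operator between sequence spaces. The one analytic input that produces the shift $d^n_p$ is the Hausdorff--Young inequality. For $f \in B^{s_1}_p(\rn)$ put $f_j = (\vp_j \wh f)^\vee$, so that $\wh{f_j} = \vp_j \wh f$ is supported in the annulus $|\xi| \sim 2^j$, a set of measure $\sim 2^{jn}$. For $1<p \le 2$, Hausdorff--Young gives $\|\wh{f_j}\,|\,L_{p'}(\rn)\| \le c\,\|f_j\,|\,L_p(\rn)\|$, and Hölder on the annulus then yields
\[
\|\wh{f_j}\,|\,L_p(\rn)\| \le c\,2^{j d^n_p}\,\|f_j\,|\,L_p(\rn)\|, \qquad d^n_p = 2n\big(\tfrac1p - \tfrac12\big).
\]
The decisive structural remark is that a function with Fourier support in $\{|\xi|\sim 2^j\}$ has itself Fourier transform localized at frequency $2^j$, so $\wh{f_j}$ is, up to neighbouring blocks, exactly the $j$--th Littlewood--Paley block of $\wh f$. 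Hence $\|\wh f\,|\,B^{s_2}_p(\rn)\|^p \sim \sum_j 2^{j s_2 p}\|\wh{f_j}\,|\,L_p\|^p \le c\sum_j 2^{j(s_2+d^n_p-s_1)p}\,2^{js_1 p}\|f_j\,|\,L_p\|^p$, which is dominated by $\|f\,|\,B^{s_1}_p(\rn)\|$ precisely when $s_2 \le s_1 - d^n_p$, a condition implied by the hypotheses in (i). The case $2 \le p < \infty$ follows by the dual inequality, which interchanges the roles of the exponents and accounts for the passage to part (ii).

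For compactness and the entropy numbers I would pass to sequence spaces via the wavelet isomorphism $I$ of Proposition \ref{P2.4}, transporting $F$ to an operator on $b^{s_1}_{p,p}(\rn) \to b^{s_2}_{p,p}(\rn)$. Using \eqref{3.19}, the image $\wh{\psi^j_{G,m}}$ is spatially concentrated near the annulus $|\xi|\sim 2^j$ and carries the modulation $e^{-i 2^{-j} m \xi}$, i.e.\ it lives, in wavelet phase space, at scale $\sim \log_2(2^{-j}|m|)$ and position $\sim 2^j$. In other words $F$ realizes the phase--space rotation exchanging position and frequency, composed with the $2^{j d^n_p}$ weight of the block estimate; re--expanding $\wh{\psi^j_{G,m}}$ in wavelets and using the rapid decay and moment properties of the $\psi$'s (as in \eqref{3.23}) shows that the off--diagonal tails of the resulting matrix are summable and do not affect the asymptotics. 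This rotation is exactly where compactness comes from: the plain embedding $B^{s_1}_p(\rn)\hra B^{s_2}_p(\rn)$ on all of $\rn$ is \emph{not} compact, but $F$ couples the smoothness of the source (controlling high frequencies, hence spatial decay of $\wh f$) with the negative smoothness of the target, and it is this coupling that makes the transported operator compact.

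With $F$ identified as a summable perturbation of a weighted, essentially block--diagonal operator whose $j$--th block acts on a space $\sim \ell^{M_j}_p$ with $M_j \sim 2^{jn}$ and norm $\sim 2^{j(s_2 - s_1 + d^n_p)}$, the entropy numbers reduce to the known estimates for embeddings and diagonal operators between finite--dimensional $\ell^{M}_p$, summed over $j$ with an optimal distribution of the $2^{k-1}$ covering balls among the blocks. The exponent $s_2/n$ in \eqref{4.8} arises when the low--smoothness target dominates ($s_2 > d^n_p - s_1$), the exponent $-s_1/n + 2(\tfrac1p-\tfrac12) = -(s_1-d^n_p)/n$ when the source side dominates ($s_2 < d^n_p - s_1$), and at the balance $s_2 = d^n_p - s_1$ the two contributions are of equal size. (Carl's inequality \eqref{4.4} is not used here, since we bound $e_k(F)$ directly; it is the converse passage to eigenvalues that is needed later.)

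The step I expect to be the main obstacle is the borderline case $s_2 = d^n_p - s_1$, where the logarithmic factors $(\log k)^{1/p-1/2}$ and $(\log k)^{1/2-1/p}$ appear. There no single dyadic block dominates; infinitely many blocks contribute comparably, so the sharp power of $\log k$ requires a careful non--asymptotic count of how the $2^{k-1}$ balls are allocated across the blocks, rather than the crude geometric summation that suffices in the two generic regimes. Establishing compactness rigorously (as opposed to mere boundedness) is the second delicate point, and it too rests on making the phase--space rotation picture quantitative through the decay of the off--diagonal wavelet matrix entries.
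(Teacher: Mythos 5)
Your sketch breaks at the step you yourself call the ``decisive structural remark'', and the break is fatal. From $\supp \wh{f_j} \subset \{|\xi|\sim 2^j\}$ you infer that $\wh{f_j}$ is frequency--localized at $2^j$ and hence is, up to neighbours, the $j$--th Littlewood--Paley block of $\wh f$. This is exactly backwards: the support of $\wh{f_j}$ describes the frequency content of $f_j$, not of $\wh{f_j}$. The $j$--th Littlewood--Paley block of $\wh f$ is $(\vp_j \, \wh{\wh f}\,)^\vee$, i.e.\ (up to reflection and constants) the Fourier transform of the \emph{spatial} piece $\vp_j f$; smoothness of $\wh f$ is governed by decay of $f$, which the smoothness norm of $B^{s_1}_p(\rn)$ does not control. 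Concretely, take $f = (\vp_J)^\vee$ with $\vp_J$ as in \eqref{2.4} and $J$ large. Then $\wh f = \vp_J$ is a \emph{low--frequency} function (its Fourier transform is concentrated in $|\eta| \lesssim 2^{-J}$), so $\| \wh f \,|\, B^{s_2}_p (\rn)\| \sim \| \vp_J \,|\, L_p (\rn)\| \sim 2^{Jn/p}$ for every $s_2 \le 0$, while your claimed equivalence $\| \wh f \,|\, B^{s_2}_p\|^p \sim \sum_j 2^{js_2 p} \| \vp_j \wh f \,|\, L_p\|^p$ would give the much smaller value $2^{J(s_2 + n/p)}$. Since $\| f \,|\, B^{s_1}_p (\rn)\| \sim 2^{J(s_1 + n/p')}$, this example shows that boundedness of $F: B^{s_1}_p (\rn) \hra B^{s_2}_p (\rn)$ forces $s_1 \ge d^n_p$ \emph{no matter how negative $s_2$ is}; hence your derived sufficient condition ``$s_2 \le s_1 - d^n_p$'' is false (take $1<p<2$, $s_1 = 0$, $s_2 = -2d^n_p$). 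The hypotheses of the proposition, $s_1 > d^n_p$ \emph{and} $s_2 < 0$, are two independent conditions: the first handles dilated examples as above, the second handles translated bumps $f = \phi(\cdot - x_0)$, $|x_0| = 2^J$, whose Fourier transforms oscillate at frequency $2^J$. Any proof must therefore control space and frequency localization simultaneously; your blockwise Hausdorff--Young/H\"{o}lder estimate (which is correct as far as it goes) only sees the frequency side, and no rearrangement of it can produce both conditions.

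The later parts inherit this defect and add two further gaps. For part (ii) you invoke ``the dual inequality'' blockwise, but for $p \ge 2$ there is \emph{no} inequality $\| \wh{f_j} \,|\, L_p\| \le c_j \| f_j \,|\, L_p\|$ at all: concentrating $\wh{f_j}$ on a ball of radius $\ve$ inside the annulus makes the ratio blow up like $\ve^{d^n_p} \to \infty$. Duality can only be applied at the level of the spaces via $B^s_p(\rn)' = B^{-s}_{p'}(\rn)$; that transfers boundedness and compactness from (i) to (ii), but \emph{not} entropy numbers, since the duality problem for entropy numbers is open in this generality --- so (ii) needs its own direct argument. Finally, the trichotomy in \eqref{4.8}, \eqref{4.10}, and in particular the logarithmic factors on the line $s_2 = d^n_p - s_1$, cannot be extracted from a block--diagonal model built on the wrong reduction. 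For the record, the paper under review does not reprove the proposition either: Remark \ref{R4.4} quotes it from \cite{T21}. The mechanism there is a factorization through \emph{weighted} spaces: by \eqref{4.21}, lifts acting on $\wh f$ correspond to multiplication of $f$ by powers of $\langle x \rangle$, so that \eqref{4.7} becomes a mapping of $F$ from a Besov space with weight $\langle x \rangle^{-s_2}$ into an unweighted one, and compactness, the three regimes and the log corrections are then inherited from the known entropy numbers of compact embeddings of weighted Besov spaces. That factorization --- not a Littlewood--Paley identification of $\vp_j \wh f$ --- is the idea your sketch is missing.
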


\begin{remark}   \label{R4.4}
This coincides with the related parts of \cite[Theorem 4.8]{T21}. In this paper one finds also further results and discussions showing
that the above restrictions for $s_1$ and $s_2$ are natural.
\end{remark}

\subsection{Spectral theory}    \label{S4.2}
We call $F_\tau$,
\begin{\eq}   \label{4.11}
(F_\tau f)(x) = \int_{\rn} e^{-ix \xi} \tau (x,\xi) f (\xi) \, \di \xi, \qquad x\in \rn,
\end{\eq}
a {\em Fourier operator} of the class $\Phi^\sigma_{1,\delta} (\rn)$ with $\sigma \in \real$ and $0 \le \delta
\le 1$ if the symbol $\tau (x,\xi) \in C^\infty (\real^{2n} )$ satisfies for some constants $c_{\alpha, \gamma} \ge 0$,
\begin{\eq}   \label{4.12}
\big| D^\alpha_x D^\gamma_\xi \tau (x, \xi) \big| \le c_{\alpha, \gamma} (1+ |\xi|)^{\sigma - |\gamma| + \delta |\alpha|}, \qquad
x\in \rn, \quad \xi \in \rn,
\end{\eq}
$\alpha \in \nat^n_0$, $\gamma \in \nat^n_0$. This is the direct counterpart of the class $\Psi^\sigma_{1, \delta}(\rn)$ of 
pseudodifferential operators as recalled in \eqref{3.1}, \eqref{3.2}. We prefer now $T_\tau$ according to \eqref{1.3} compared with
\eqref{3.1}. But this is immaterial and can be compensated if one replaces $\tau (x,\xi)$ by $\tau(x, -\xi)$. In particular with the
Fourier transform $F$ as in Proposition \ref{P4.3} one can decompose $F_\tau \in \Phi^\sigma_{1, \delta} (\rn)$ as
\begin{\eq}   \label{4.13}
F_\tau = T_\tau \circ F, \qquad T_\tau \in \Psi^\sigma_{1, \delta} (\rn).
\end{\eq}
This gives the possibility to combine Proposition \ref{P4.3} with Theorem \ref{T3.1}. Let again
\begin{\eq}   \label{4.14}
B^s_p (\rn) = B^s_{p,p} (\rn), \qquad s \in \real, \quad 1<p<\infty,
\end{\eq}
and
\begin{\eq}   \label{4.15}
d^n_p = 2n \big( \frac{1}{p} - \frac{1}{2} \big), \qquad 1<p<\infty.
\end{\eq}
As explained in \eqref{4.2}, \eqref{4.3} the non--zero eigenvalues $\lambda_k (K)$ of compact operators $K$ in Banach spaces are
counted with respect to their algebraic multiplicity. 

\begin{theorem}   \label{T4.5}
Let $n\in \nat$, $0 \le \delta \le 1$, $\sigma >0$ and $F_\tau \in \Phi^{-\sigma}_{1, \delta} (\rn)$.
\cm
{\em (i)} Let $1<p \le 2$ and $\sigma >s> d^n_p$. Then 
\begin{\eq}  \label{4.16}
F_\tau: \quad B^s_p (\rn) \hra B^s_p (\rn)
\end{\eq}
is compact and
\begin{\eq}   \label{4.17}
|\lambda_k (F_\tau )|  \le c\,
\begin{cases}
k^{-\frac{1}{n} \min (\sigma -s, s -d^n_p)} &\text{if $ 2s \not= \sigma + d^n_p$}, \cm
\big( \frac{k}{\log k} \big)^{-\frac{\sigma-s}{n}} \, (\log k )^{\frac{1}{p} - \frac{1}{2}} &\text{if $2s = \sigma + d^n_p$},
\end{cases} 
\end{\eq}
for some $c>0$ and all $2\le k \in \nat$.
\cm
{\em (ii)} Let $2 \le p <\infty$ and $0<s<\sigma + d^n_p$. Then
\begin{\eq}   \label{4.18}
F_\tau : \quad B^s_p (\rn) \hra B^s_p (\rn)
\end{\eq}
is compact and
\begin{\eq}   \label{4.19}
|\lambda_k (F_\tau )|  \le c\,
\begin{cases}
k^{-\frac{1}{n} \min (s, d^n_p -s +\sigma)} &\text{if $ 2s \not= \sigma + d^n_p$}, \cm
\big( \frac{k}{\log k} \big)^{-\frac{s}{n}} \, (\log k )^{\frac{1}{2} - \frac{1}{p}} &\text{if $2s = \sigma + d^n_p$},
\end{cases} 
\end{\eq}
for some $c>0$ and all $2 \le k \in \nat$.
\end{theorem}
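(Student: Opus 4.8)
\emph{Strategy.} The plan is to exploit the factorization $F_\tau = T_\tau \circ F$ from \eqref{4.13}, with $T_\tau \in \Psi^{-\sigma}_{1,\delta}(\rn)$, and to route it through a single intermediate space,
\[
B^s_p (\rn) \overset{F}{\hra} B^{s-\sigma}_p (\rn) \overset{T_\tau}{\hra} B^s_p (\rn).
\]
The first arrow is to be made compact by Proposition \ref{P4.3}, the second continuous by Theorem \ref{T3.1}. Since entropy numbers are multiplicative, $e_k(F_\tau) \le \|T_\tau\|\, e_k(F)$, so compactness of $F$ forces compactness of $F_\tau$, and Carl's inequality \eqref{4.4} then gives $|\lambda_k(F_\tau)| \le \sqrt2\, e_k(F_\tau) \le c\, e_k(F)$. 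Everything thus reduces to inserting the correct entropy estimate for $F$ and to justifying the continuity of $T_\tau$.

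\emph{Continuity of $T_\tau$.} I would obtain $T_\tau \colon B^{s-\sigma}_p(\rn) \hra B^s_p(\rn)$ from the lift \eqref{3.5}, \eqref{3.6}. Writing $T_\tau = (T_\tau \circ I_{-\sigma}) \circ I_\sigma$, the factor $I_{-\sigma}$ is a Fourier multiplier with $x$--independent symbol $\langle \xi \rangle^\sigma$, so the composite $T_\tau \circ I_{-\sigma}$ carries the exact symbol $\tau(x,\xi)\langle \xi\rangle^\sigma$ and hence belongs to $\Psi^0_{1,\delta}(\rn)$ for every $0 \le \delta \le 1$, with no calculus needed since the right factor is a multiplier. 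As $I_\sigma \colon B^{s-\sigma}_p(\rn) \hra B^s_p(\rn)$ is an isomorphism, it remains to map $B^s_p(\rn)$ into itself by $T_\tau \circ I_{-\sigma}$. In both parts one has $s > \sigma^n_p$ (indeed $s > d^n_p \ge 0$ in (i) and $s > 0$ in (ii), while $\sigma^n_p = 0$ for $p > 1$), so Step 1 of the proof of Theorem \ref{T3.1} applies directly and requires no hypothesis on the dual operator even in the exotic case $\delta = 1$. This is precisely why the assertion holds for all $0 \le \delta \le 1$ without a dual condition.

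\emph{Entropy estimate and bookkeeping.} In Proposition \ref{P4.3} I would take source exponent $s_1 = s$ and target exponent $s_2 = s - \sigma$; this is the smallest admissible value, since $s_2 + \sigma \ge s$ is forced by the requirement that $T_\tau$ land back in $B^s_p(\rn)$. The hypotheses hold: in (i), $s_1 = s > d^n_p$ and $s_2 = s-\sigma < 0$ because $\sigma > s$; in (ii), $s_1 = s > 0$ and $s_2 = s - \sigma < d^n_p$ because $s < \sigma + d^n_p$. The trichotomy $s_2 \gtrless d^n_p - s_1$ of Proposition \ref{P4.3} becomes exactly $2s \gtrless \sigma + d^n_p$. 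Reading off $e_k(F)$, substituting $2(\tfrac1p - \tfrac12) = d^n_p/n$, and simplifying the minimum of the two exponents reproduces \eqref{4.17} and \eqref{4.19}; for instance in (i) with $2s > \sigma + d^n_p$ one gets $e_k(F) \le c\,k^{(s-\sigma)/n}$ and $\min(\sigma - s, s - d^n_p) = \sigma - s$, while with $2s < \sigma + d^n_p$ one gets $e_k(F) \le c\,k^{-s/n + d^n_p/n}$ and $\min = s - d^n_p$, the borderline case carrying the logarithmic factor verbatim. Case (ii) is handled identically from Proposition \ref{P4.3}(ii).

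\emph{Main obstacle.} The one genuinely delicate point is the continuity of $T_\tau$ in the exotic range $\delta = 1$, where the dual hypothesis of Theorem \ref{T3.1} is not at hand. The resolution above --- first smoothing by $I_\sigma$ into the high--regularity space $B^s_p(\rn)$ with $s > \sigma^n_p$, so that Step 1 alone suffices --- is what makes the clean hypotheses of Theorem \ref{T4.5} enough. The rest is the purely arithmetic verification that the optimal choice $s_2 = s - \sigma$ transports the three regimes of Proposition \ref{P4.3} onto the stated eigenvalue bounds.
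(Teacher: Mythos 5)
Your proposal is correct and takes essentially the same route as the paper: the paper's proof also factors $F_\tau = T_{\tau_\sigma}\circ I_\sigma\circ F$ with $\tau_\sigma(x,\xi)=\tau(x,\xi)\langle\xi\rangle^\sigma$ — which is exactly your $T_\tau\circ I_{-\sigma}$ — uses \eqref{3.36} (i.e.\ Step 1 of the proof of Theorem \ref{T3.1}, valid for $s>\sigma^n_p$ without any dual hypothesis even when $\delta=1$) for the continuity on $B^s_p(\rn)$, and then combines Proposition \ref{P4.3} with $s_1=s$, $s_2=s-\sigma$ and Carl's inequality \eqref{4.4}. Your bookkeeping of the three regimes, including the identification of the borderline $2s=\sigma+d^n_p$, matches the paper's.
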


\begin{proof}
The lift
\begin{\eq}   \label{4.20}
I_\sigma: \quad B^{s-\sigma}_p (\rn) \hra B^s_p (\rn)
\end{\eq}
in \eqref{3.5}, \eqref{3.6} can also be written as
\begin{\eq}   \label{4.21}
I_\sigma f = \big( \langle \xi \rangle^{-\sigma} f^\vee \big)^\wedge, \qquad \langle \xi \rangle = (1+ |\xi|^2 )^{1/2}, \quad \xi \in
\rn.
\end{\eq}
With $\tau_\sigma (x,\xi) = \tau(x,\xi) \langle \xi \rangle^\sigma$ one has $F_{\tau_\sigma} \in \Phi^0_{1, \delta} (\rn)$ and
\begin{\eq}   \label{4.22}
\begin{aligned}
(F_\tau f)(x) &= \int_{\rn} e^{-ix \xi} \tau_\sigma (x,\xi) \langle \xi \rangle^{-\sigma} f(\xi ) \, \di \xi \\
&= \int_{\rn} e^{-ix \xi} \tau_\sigma (x, \xi) (I_\sigma \wh{f}\, )^\vee (\xi) \, \di \xi
\end{aligned}
\end{\eq}
shows that $F_\tau$ can be decomposed as
\begin{\eq}   \label{4.23}
F_\tau = T_{\tau_\sigma} \circ I_\sigma \circ F, \qquad T_{\tau_\sigma} \in \Psi^0_{1, \delta} (\rn),
\end{\eq}
according to \eqref{4.13}. It follows from $s>0$, $1<p<\infty$ and \eqref{3.36} that one has always
\begin{\eq}   \label{4.24}
T_{\tau_\sigma}: \quad B^s_p (\rn) \hra B^s_p (\rn)
\end{\eq}
without additional assumptions about the dual in the exotic case. Then one has by \eqref{4.20} and Proposition \ref{P4.3} with $s_1
=s$ and $s_2 = s -\sigma$ that $F_\tau$ in \eqref{4.16} is compact and
\begin{\eq}   \label{4.25}
e_k (F_\tau) \le c \, e_k \big( F: \ B^s_p (\rn) \hra B^{s- \sigma}_p (\rn) \big), \qquad k\in \nat.
\end{\eq}
The estimates \eqref{4.17}, \eqref{4.19} follow now from \eqref{4.4} and \eqref{4.8}, \eqref{4.10}.
\end{proof}

\begin{remark}   \label{R4.6}
For $H^s (\rn) = B^s_2 (\rn) = B^s_{2,2} (\rn)$ it follows that
\begin{\eq}  \label{4.26}
F_\tau: \quad H^s (\rn) \hra H^s (\rn), \qquad 0<s<\sigma,
\end{\eq}
is compact and
\begin{\eq}   \label{4.27}
|\lambda_k (F_\tau )|  \le c\,
\begin{cases}
k^{-\frac{1}{n} \min (s, \sigma -s)} &\text{if $ 2s \not= \sigma$}, \cm
\big( \frac{k}{\log k} \big)^{-\frac{s}{n}} &\text{if $2s = \sigma$},
\end{cases} 
\end{\eq}
for some $c>0$ and all $2\le k \in \nat$. In any case, the right--hand sides of \eqref{4.17}, \eqref{4.19} decay most rapidly for
$F_\tau$ in \eqref{4.16} with $1<p<\infty$ if $2s = \sigma + d^n_p$.
\end{remark}

\begin{remark}   \label{R4.7}
There is a dual of $F_\tau$ in \eqref{4.11} at least in the simplest case when the symbol in \eqref{4.12} with $-\sigma$ in place of
$\sigma$, $\sigma >0$, is independent of $x\in \rn$, $\tau (x,\xi) = \tau (\xi)$. Then
\begin{\eq}   \label{4.28}
\big( F'_\tau f \big)(\xi) = \tau (\xi) \wh{f} (\xi), \qquad \xi \in \rn.
\end{\eq}
Based on \eqref{3.28} and the duality theory for compact operators  in Banach spaces one can transfer Theorem \ref{T4.5} from $F_\tau$
to $F'_\tau$. But in this rather peculiar case one can argue much simpler, decomposing $F'_\tau$ as
\begin{\eq}   \label{4.29}
F'_\tau = F \circ I_\tau, \qquad I_\tau f = (\tau \wh{f}\, )^\vee
\end{\eq}
with
\begin{\eq}   \label{4.30}
I_\tau: \quad B^s_p (\rn) \hra B^{s+\sigma}_p (\rn)
\end{\eq}
combined with
\begin{\eq}   \label{4.31}
F: \quad B^{s+\sigma}_p (\rn) \hra B^s_p (\rn)
\end{\eq}
in Proposition \ref{P4.3} and the mapping properties described there. We do not go into the details.
\end{remark}

\subsection{Fourier heat operator}   \label{S4.3}
The Gauss--Weierstrass semi--group $W_t$,
\begin{\eq}   \label{4.32}
W_t f (x) = \big( e^{-t |\xi|^2} \wh{f} \, \big)^\vee, \qquad x\in \rn, \quad t>0,
\end{\eq}
plays a fundamental role in many parts of mathematics, including the theory of the spaces $\As (\rn)$,  linear and non--linear heat
equations, Navier--Stokes equations and diverse types of evolutionary equations. Then it is quite natural to ask what can be said
about related {\em Fourier heat operators} $W^t$,
\begin{\eq}   \label{4.33}
W^t f(x) = \int_{\rn} e^{-ix \xi} e^{-t |\xi|^2} f(\xi) \, \di \xi, \qquad x\in \rn, \quad t>0,
\end{\eq}
in the context of the above theory. We formulate the outcome. Let again $d^n_p = 2n (\frac{1}{p} - \frac{1}{2} )$, $1<p<\infty$, as
in \eqref{4.15}. As explained in \eqref{4.2}, \eqref{4.3} the non--zero eigenvalues $\lambda_k (K)$ of compact operators in Banach
spaces are counted with respect to their algebraic multiplicity.

\begin{theorem}   \label{T4.8}
Let $n\in \nat$.
\cm
{\em (i)} Let $1<p \le 2$ and $s> d^n_p$. Then
\begin{\eq}   \label{4.34}
W^t: \quad B^s_p (\rn) \hra B^s_p (\rn)
\end{\eq}
is compact and
\begin{\eq}   \label{4.35}
|\lambda_k (W^t)| \le c \, t^{-s + n(\frac{1}{p} - \frac{1}{2})} \, \Big( \frac{k}{\log k} \Big)^{- \frac{s-d^n_p}{n}} \big( 
\log k)^{\frac{1}{p} - \frac{1}{2}}
\end{\eq}
for some $c>0$, all $2 \le k \in \nat$ and all $0<t \le 1$.
\cm
{\em (ii)} Let $2\le p <\infty$ and $s>0$. Then
\begin{\eq}   \label{4.36}
W^t: \quad B^s_p (\rn) \hra B^s_p (\rn)
\end{\eq}
is compact and
\begin{\eq}   \label{4.37}
|\lambda_k (W^t)| \le c \, t^{-s + n(\frac{1}{p} - \frac{1}{2})} \, \Big( \frac{k}{\log k} \Big)^{- \frac{s}{n}} \big( 
\log k\big)^{\frac{1}{2}- \frac{1}{p}}
\end{\eq}
for some $c>0$, all $2\le k \in \nat$ and all $0<t \le 1$.
\end{theorem}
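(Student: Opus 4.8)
The plan is to realise $W^t$ as a particular Fourier operator and to run the proof scheme of Theorem~\ref{T4.5}, the only additional work being the \emph{explicit} dependence on $t$. Comparing \eqref{4.33} with \eqref{4.11} shows that $W^t$ is the Fourier operator with the $x$--independent symbol $\tau(\xi) = e^{-t|\xi|^2}$. Since $\tau$ decays faster than any negative power of $\langle \xi \rangle$, one has $D^\alpha_x \tau = 0$ for $\alpha \not= 0$ and $|D^\gamma_\xi \tau(\xi)| \le c_\gamma (1+|\xi|)^{-\sigma - |\gamma|}$ for \emph{every} $\sigma >0$, so the symbol lies in $\Phi^{-\sigma}_{1,\delta}(\rn)$ for every $\sigma>0$ and any $0\le \delta \le 1$; thus Theorem~\ref{T4.5} applies for each admissible $\sigma$. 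As in the decomposition \eqref{4.13}, \eqref{4.23}, the $x$--independence turns the pseudodifferential factor into a pure Fourier multiplier, namely the Gauss--Weierstrass semigroup \eqref{4.32},
\begin{\eq}
W^t = c\, W_t \circ F, \qquad W_t f = \big( e^{-t|\xi|^2} \wh{f}\, \big)^\vee,
\end{\eq}
with $c>0$ a normalising constant. Qualitatively this already yields compactness and the $k$--decay for each fixed $t$; the point of the theorem is to make the constant $t$--dependent.

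First I would record the entropy estimate furnished by the proof of Theorem~\ref{T4.5}. Putting $\sigma = 2s - d^n_p$ and factoring $W^t$ through $B^{s-\sigma}_p(\rn)$ as above, submultiplicativity of entropy numbers gives
\begin{\eq}
e_k (W^t) \le c\, \big\| W_t: \ B^{s-\sigma}_p (\rn) \hra B^s_p (\rn) \big\| \cdot e_k \big( F: \ B^s_p (\rn) \hra B^{s-\sigma}_p (\rn) \big),
\end{\eq}
where the $k$--behaviour is carried entirely by the ($t$--independent) entropy numbers of $F$ in Proposition~\ref{P4.3}, while the whole $t$--dependence is concentrated in the operator norm of $W_t$. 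Here the multiplier $W_t = T_{\tau_\sigma}\circ I_\sigma$ in the notation of \eqref{4.23}, since the symbols satisfy $e^{-t|\xi|^2}\langle\xi\rangle^\sigma \cdot \langle\xi\rangle^{-\sigma} = e^{-t|\xi|^2}$, so this is precisely the specialisation of \eqref{4.25} with the constant made explicit.

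The key quantitative input is then the smoothing bound
\begin{\eq}
\big\| W_t: \ B^{s-\sigma}_p (\rn) \hra B^s_p (\rn) \big\| \le c\, t^{-\sigma/2}, \qquad 0<t \le 1, \quad \sigma >0,
\end{\eq}
which I would prove by a dyadic block estimate: on frequencies $|\xi| \sim 2^j$ the multiplier $e^{-t|\xi|^2}$ and its relevant derivatives contribute a factor of size $\sim e^{-ct 2^{2j}}$, and optimising in $j$ gives $\sup_{j} 2^{j\sigma} e^{-ct 2^{2j}} \le c'\, t^{-\sigma/2}$; summation in $j$ (with $q=p$ for $B^s_p = B^s_{p,p}$) then yields the claim. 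This is the classical Gauss--Weierstrass smoothing estimate, and extracting its sharp power of $t$, uniformly for $0<t\le 1$, is the main obstacle of the proof.

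It remains to choose $\sigma$ and to assemble the pieces. The choice $\sigma = 2s - d^n_p$ places $s_2 = s-\sigma = d^n_p - s = d^n_p - s_1$ exactly on the critical line of Proposition~\ref{P4.3}: under the hypotheses of part~(i), $1<p\le 2$ and $s> d^n_p$, one checks $s_1 = s > d^n_p$, $s_2 <0$ and $\sigma >0$, while under those of part~(ii), $2\le p<\infty$ and $s>0$, one has $s_1 = s >0$, $s_2 < d^n_p$ and again $\sigma>0$. Hence the middle (logarithmic) cases of \eqref{4.8} and \eqref{4.10} apply, producing the stated $k$--factors, and $t^{-\sigma/2} = t^{-s + n(\frac{1}{p} - \frac{1}{2})}$ reproduces the power of $t$ in \eqref{4.35} and \eqref{4.37}. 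Compactness follows from $e_k(W^t) \to 0$, and Carl's inequality \eqref{4.4} converts the entropy bound into the eigenvalue estimates \eqref{4.35}, \eqref{4.37}. Apart from the uniform smoothing bound above, everything is bookkeeping within the scheme of Theorem~\ref{T4.5}.
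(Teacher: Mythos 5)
Your proposal is correct and follows essentially the same route as the paper: factor $W^t = c\, W_t \circ F$, choose $\sigma = 2s - d^n_p$ so that the critical (logarithmic) cases of Proposition \ref{P4.3} apply, concentrate the whole $t$--dependence in the smoothing norm of $W_t$ in place of the lift $I_\sigma$, and finish with Carl's inequality \eqref{4.4}. The only difference is that the paper obtains your key bound $\big\| W_t\colon B^{s-\sigma}_p (\rn) \hra B^s_p (\rn) \big\| \le c\, t^{-\sigma/2}$ by citing \eqref{4.38}, i.e. \cite[Theorem 3.35, p.\,110]{T20}, rather than re-deriving it, while your dyadic-block derivation of that estimate is sound.
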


\begin{proof}
From \eqref{4.11}, \eqref{4.12} it follows $W^t \in \Phi^{-\sigma}_{1, 0} (\rn)$ for any $\sigma >0$. Then one can apply \eqref{4.17},
\eqref{4.19} with $\sigma= 2s - d^n_p$ as the best possible choice. This justifies \eqref{4.35}, \eqref{4.37} with exception of the
$t$--dependence. Instead of the lifting $I_\sigma$ in \eqref{4.20}, \eqref{4.23} one can rely on the  mapping property
\begin{\eq}   \label{4.38}
t^{d/2} \, \| W_t w \, | A^{s+d}_{p,q} (\rn) \| \le c \, \|w \, | \As (\rn) \|, \qquad 0<t<1,
\end{\eq}
with $W_t$ as in \eqref{4.32} for $A \in \{B,F \}$, $s\in \real$, $d>0$ and $0< p,q \le \infty$ as described in \cite[Theorem 3.35,
p.\,110]{T20}. There one finds also related references and explanations. Here $c>0$ in \eqref{4.38} is independent of $t$ with $0<t
<1$. Then the $t$-dependence in \eqref{4.35}, \eqref{4.37} follows from \eqref{4.38} with $d= \sigma =2s - d^n_p$ in place of
$I_\sigma$.
\end{proof}

\begin{remark}   \label{R4.9}
One can extend the above theorem to the fractional Fourier heat operator $W^t_\alpha$,
\begin{\eq}   \label{4.39}
W^t_\alpha f(x) = \int_{\rn} e^{-ix \xi} e^{-t |\xi|^{2\alpha}} f(\xi) \, \di \xi, \qquad x\in \rn, \quad \alpha>0, 
\end{\eq}
based on the fractional counterparts of \eqref{4.32}, \eqref{4.38}. But we do not go into the details.
\end{remark}

\end{document}